\newtheorem{theorem}{Theorem}[section]
\newtheorem{lemma}[theorem]{Lemma}
\newtheorem{proposition}[theorem]{Proposition}
\theoremstyle{definition}
\newtheorem{definition}[theorem]{Definition}
\newtheorem{example}[theorem]{Example}
\theoremstyle{remark}
\newtheorem{remark}[theorem]{Remark}
\numberwithin{equation}{section}
\begin{document}

\title{On Contact $CR$-Product of Sasakian statistical manifold}

\author[V.Rani,\; J.Kaur]{Vandana Rani$^1$, \;Jasleen Kaur$^2$$^{*}$}
\address{$^{1}$ Department of Mathematics, Punjabi University, Patiala, India.}
\email{vgupta.87@rediffmail.com}

\address{$^{2}$ Department of Mathematics, Punjabi University, Patiala, India.}
\email{jass2381@gmail.com}
\subjclass[2010]{Primary 58A05; Secondary 53D10, 53D15.}

\keywords{Contact $CR$ submanifolds, Sasakian statistical manifold, contact $CR$ product}

\begin{abstract}
 This paper studies the geometric properties of contact $CR$-submanifolds of Sasakian statistical manifold. The integrability of invariant and anti-invariant distributions of contact $CR$-submanifolds has been characterized. Results on $D$-totally geodesic, mixed totally geodesic and $D$-umbilic contact $CR$ submanifolds with regard to dual connections in statistical manifolds have been developed. The statistical version of contact $CR$-product of Sasakian statistical manifold has been introduced. 
\end{abstract}

\maketitle

\section{\textbf{INTRODUCTION}}
The concept of contact  $CR$-submanifolds of a Sasakian manifold  was first introduced by  \cite{yanokoncr} and further developed in \cite{yan01982},  \cite{kon1983}, \cite{yanoandkon1984}. Some fundamental properties of totally umbilical contact $CR$-submanifold of a Sasakian manifold and the contact $CR$-product of a Sasakian space form were worked upon by \cite{matsumoto}. \cite{shahid1985} studied some properties of $D$-totally geodesic and $D^{\perp}$ totally geodesic $CR$-submanifolds of a Sasakian manifold. The examination of these submanifolds was later continued by \cite{shahidtrans} for the Trans-Sasakian manifold which is a generalization of Sasakian manifold. Also skew-$CR$ submanifolds of Sasakian manifold was introduced by \cite{liuximin} where a series of significant results were obtained. Further some related classes such as generalized $CR$ submanifolds of Trans-Sasakian manifolds and $CR$ submanifolds of a Lorentzian para-Sasakian manifold endowed with a semi-symmetric connection were investigated for integrability conditions. \\

Statistical manifolds, introduced by Rao \cite{Rao1945}, have emerged as a prominent area of research in geometry.  The notion of statistical structure has played an important role in the development of a new sphere of study called information geometry. A statistical manifold refers to a statistical model equipped with a Riemannian metric and a pair of dual affine connections. These manifolds have been investigated thoroughly by \cite{amari1985Differential}, \cite{amari1987differential} et.al. and have applications in the field of statistical inference, neural networks, control system etc. In the realm of complex geometry, for a statistical manifold, \cite{furuhata2017} et al. introduced a new notion corresponding to a Sasakian structure, called Sasakian statistical manifold. They obtained some conditions for a real hypersurface in a holomorphic statistical manifold and extended this work for invariant submanifolds in \cite{furuhataii} . This concept was afterwards explored by \cite{kazankazan2018} et al. wherein relations between the curvature tensors of the semi-symmetric metric connection and of the torsion-free dual connections on Sasakian statistical manifold were obtained. \cite{zhang} et.al. studied results on statistical hypersurfaces of Sasakian statistical manifold and holomorphic statistical manifold. \cite{aliyacr2020} also contibuted to the field by developing results on $CR$-statistical submanifolds and establishing characterization theorems for a $CR$-product in holomorphic statistical manifolds.\\

  Since the existing advancements in the geometry of contact $CR$-submanifolds of Sasakian statistical manifold are limited, the present study seeks to examine the structure of these submanifolds. Various results for the integrability and geodesicity of the contact $CR$ submanifolds have been characterized. A necessary and sufficient condition  for a contact  $CR$-submanifold to be a contact $CR$-product has been demonstrated.

\vspace{.2in}
\section{Preliminaries}
Following are some fundamental concepts 
    relevant to the theory of submanifolds of a Sasakian statistical manifold.\\
    \begin{definition}\label{def21}\cite{furuhata2016submanifold}
    Let $\bar M$ be a $C^\infty$ manifold of dimension $\bar m \geq 2$,  $\bar\nabla$ be an affine connection on $\bar M$ and $\bar g$ be a Riemannian metric on $\bar M$.  Then 
     ($\bar M$,$\bar\nabla$,$\bar g$) is called a statistical manifold if
     $\bar\nabla$ is of torsion free and 
    \begin{equation} \label{eq21}
     (\bar\nabla_{X} \bar{g}) (Y,Z) = (\bar\nabla_{Y} \bar{g} )(X,Z) \quad for \;\; X,Y,\in \Gamma(T\bar M).
    \end{equation}
      
    Moreover, an affine connection $\bar\nabla^*$ is called the dual connection of $\bar\nabla$ with respect to $\bar g$ 
    if 
    \begin{equation}\label{eq22}
    X\bar g(Y,Z) = \bar g(\bar\nabla_X Y,Z) + \bar g(Y,\bar\nabla^*_XZ)\quad  for\; X,Y,Z \in \Gamma(T\bar M) \end{equation}
    If ($\bar M$,$\bar\nabla$,$\bar g$) is a statistical manifold, then so is ($\bar M$,$\bar\nabla^*$, $\bar g$). We therefore denote the statistical manifold  by $(\bar{M},\bar{g},\bar{\nabla},\bar{\nabla}^*)$.\\
    \end{definition}
	\begin{remark}	
		Any torsion-free affine connection $\bar{\nabla}$ has a dual connection $\bar{\nabla}^{*}$ given by
		\[
		2\hat{\bar{\nabla}} = \bar{\nabla} + \bar{\nabla}^{*}
		\] 
	 where $\hat{\bar{\nabla}}$ is Levi-Civita Connection. 
		\end{remark}
\begin{remark}
For any vector field $X,Y$ on $\bar{M}$, the difference (1,2) - tensor field $K_XY$ is defined as
\begin{equation}\label{eq23}
K_XY = \bar{\nabla}_XY - \hat{\bar{\nabla}}_XY 
\end{equation}	
 Since $\bar{\nabla}$ and $\widehat{\bar{\nabla}}$ are torsion free, we have
    \[
    	K_{X}Y = K_{Y}X, \qquad \bar{g}(K_{X}Y, Z) = \bar{g}(Y, K_{X}Z).
    \]
    for any $X, Y, Z \in \Gamma(T\bar{M})$.\\
 \noindent Also
    \[
    K(X,Y) = \widehat{\bar{\nabla}}_{X}Y - \bar{\nabla}^{*}_{X}Y
    \]
    From the above equations, we get
    \[
    K(X,Y) = \frac{1}{2}(\bar{\nabla}_{X}Y - \bar{\nabla}^{*}_{X}Y)
    \]
    
\end{remark}
	
\begin{definition}\cite{shahid1985}
For an odd-dimensional Riemannian manifold $(\bar{M}, \bar{g})$, if there exist a $(1,1)$ tensor field $\phi$, a unit vector field $\xi$ called characteristic vector field, and a 1-form $\eta$ such that
    	
    	\begin{equation}\label{eq24}
    		\phi^{2}(X) = - X + \eta(X)\xi, \quad \bar{g}(X, \xi) = \eta(X),
    	\end{equation}
    	and
    	\begin{equation}\label{eq25}
    	    		\bar{g}(\phi X, \phi Y) = \bar{g}(X, Y) - \eta(X)\eta(Y), \qquad \bar{g}(\xi, \xi) = 1,   
    	    	\end{equation} 
then $\bar{M}$ is said to be an almost contact metric manifold with an almost contact metric structure $(\phi, \xi, \eta, \bar{g})$.\\
 If    	    	    	    	
    	\begin{equation}\label{eq26}
    		 d\eta(X,Y) = \bar{g}(X,\phi Y)= -\bar g(\phi X,Y), \quad \forall \;X,Y \in \Gamma(T\bar{M}) ,
    	\end{equation}
    	it follows that $\phi\xi = 0,\; \eta o\phi = 0,\; \eta(\xi) = 1$ and $(\phi, \xi, \eta, \bar{g})$ is called a contact metric structure on $\bar{M}$.\\
    	
    	Also, $\bar{M}$ has a normal contact structure if $N_{\phi} + d\eta \otimes \nu = 0$, where $N_{\phi}$ is the Nijenhuis tensor field.\\
	
 An almost contact metric manifold $\bar{M}$ is called a Sasakian manifold if
	\begin{equation}\label{eq27}
	\hat{\bar{\nabla}}_{X}\xi = - \phi X, 
	\end{equation}
	\begin{equation}\label{eq28}
	(\hat{\bar{\nabla}}_{X}\phi)Y = \bar{g}(X,Y)\xi - \eta(Y)X.
	\end{equation}
	holds for any $X, Y \in \Gamma(TM)$, where $\hat{\bar{\nabla}}$ is Levi-Civita Connection. 
	\end{definition} 

\begin{definition}\cite{furuhata2017}\label{def2.6}
 A quadruplet $(\bar{\nabla} = \hat{\bar{\nabla}} + K, \bar{g}, \phi, \xi)$ is called  Sasakian statistical structure on $\bar{M}$ if\\ 
    	$(i)\;(\bar{g},\phi,\xi)$ is  Sasakian structure on $\bar{M}$\\
    	$(ii)\;(\bar{\nabla},\bar{g})$ is a statistical structure on $\bar{M}$\\
    	and the condition
    	\begin{equation}\label{eq29a}
    		K(X, \phi Y) + \phi K(X, Y) =0
    	\end{equation}
    	holds for any $X, Y \in \Gamma(T\bar{M})$.\\
    	Then $(\bar{M}, \bar{\nabla}, \bar{g},\phi, \xi)$ is called Sasakian statistical manifold. If $(\bar{M}, \bar{\nabla}, \bar{g}, \phi, \xi)$ is  Sasakian statistical manifold, then so is $(\bar{M}, \bar{\nabla}^{*}, \bar{g}, \phi, \xi)$.
\end{definition}
\begin{theorem}\cite{furuhata2017}
	 Let $(\bar{M}, \bar{\nabla}, \bar{g})$ be a statistical manifold and $(\bar{g}, \phi, \xi)$ an almost contact metric structure on $\bar{M}$. Then $(\bar{\nabla}, \bar{g}, \phi, \xi)$ is a Sasakian statistical struture if and only if the following conditions hold:
	
	\begin{equation}\label{eq29}
	\bar{\nabla}_{X}\phi Y - \phi\bar{\nabla}^{*}_{X}Y = \bar g(Y, \xi)X - \bar{g}(Y,X)\xi,
	\end{equation}
	\begin{equation}\label{eq210}
	\bar{\nabla}_{X}\xi = \phi X + \bar g(\bar{\nabla}_{X}\xi, \xi)\xi,
	\end{equation}	
for the vector fields $X, Y$ on $\bar{M}$.	
\end{theorem}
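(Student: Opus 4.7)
The plan is to use the canonical decomposition $\bar{\nabla} = \hat{\bar{\nabla}} + K$, $\bar{\nabla}^{*} = \hat{\bar{\nabla}} - K$, which translates any identity involving the dual pair $(\bar{\nabla},\bar{\nabla}^{*})$ into a simultaneous identity on the Levi-Civita connection $\hat{\bar{\nabla}}$ and on the difference tensor $K$. Under this translation, the Sasakian identities (2.7)--(2.8) describe the Levi-Civita part and the compatibility (2.9a) of Definition \ref{def2.6} describes the $K$-part; the theorem then says that these two parts are jointly equivalent to the single pair (2.9)--(2.10).

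\textbf{Forward direction.} Assume $(\bar{\nabla},\bar{g},\phi,\xi)$ is Sasakian statistical. Direct expansion yields
\[
\bar{\nabla}_{X}\phi Y - \phi\bar{\nabla}^{*}_{X}Y = (\hat{\bar{\nabla}}_{X}\phi)Y + \bigl(K(X,\phi Y) + \phi K(X,Y)\bigr),
\]
where the bracketed term vanishes by (2.9a) and the remaining term is evaluated via the Sasakian identity (2.8), producing (2.9) after using $\eta(Y)=\bar{g}(Y,\xi)$. For (2.10), write $\bar{\nabla}_{X}\xi = \hat{\bar{\nabla}}_{X}\xi + K(X,\xi)$, use (2.7) for the Levi-Civita piece, and specialise (2.9a) at $Y=\xi$ to obtain $\phi K(X,\xi)=0$. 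Applying $\phi^{2}(X)=-X+\eta(X)\xi$ from (2.4) forces $K(X,\xi) = \bar{g}(K(X,\xi),\xi)\xi$; combining with $\bar{g}(\hat{\bar{\nabla}}_{X}\xi,\xi)=0$ (from metric compatibility of $\hat{\bar{\nabla}}$ together with skew-adjointness of $\phi$) then delivers (2.10).

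\textbf{Converse direction.} Assume (2.9) and (2.10). Rewriting (2.9) with the decomposition gives
\[
(\hat{\bar{\nabla}}_{X}\phi)Y + K(X,\phi Y) + \phi K(X,Y) = \bar{g}(Y,\xi)X - \bar{g}(Y,X)\xi.
\]
To separate the two contributions I would substitute $Y \mapsto \phi Y$ in this identity and apply $\phi$ to the original, then invert the resulting pair of linear equations using $\phi^{2}(X)=-X+\eta(X)\xi$, thereby isolating the Sasakian identity (2.8) and the compatibility condition (2.9a). An analogous treatment of (2.10), combined with pairing against arbitrary $Y$, swapping $X$ and $Y$, and exploiting the symmetry $\bar{g}(K(X,\xi),Y) = \bar{g}(K(X,Y),\xi)$ together with the metric compatibility of $\hat{\bar{\nabla}}$, extracts the Sasakian identity (2.7) and the instance of (2.9a) at $Y = \xi$. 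Together these deliver the remaining axioms of Definition \ref{def2.6}.

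The main obstacle is the separation step in the converse direction: equation (2.9) fuses the purely Riemannian Sasakian condition on $\hat{\bar{\nabla}}$ with the $\phi$-compatibility of $K$ into a single tensor identity, and disentangling them requires combining substitutions that are exchanged by the $\phi$-action with the algebraic symmetries of $K$ coming from its being the difference of two torsion-free connections dual with respect to $\bar{g}$. Once this algebraic untangling is carried out, both implications collapse to direct tensorial checks.
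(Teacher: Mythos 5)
A preliminary remark: the paper does not prove this theorem at all — it is quoted from \cite{furuhata2017} — so your proposal can only be judged on its own terms, not against an in-paper argument.

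Your forward direction is structurally fine but hides a sign problem. With the paper's own conventions (2.7)--(2.8), namely $\hat{\bar{\nabla}}_{X}\xi=-\phi X$ and $(\hat{\bar{\nabla}}_{X}\phi)Y=\bar{g}(X,Y)\xi-\eta(Y)X$, your (correct) expansion $\bar{\nabla}_{X}\phi Y-\phi\bar{\nabla}^{*}_{X}Y=(\hat{\bar{\nabla}}_{X}\phi)Y+K(X,\phi Y)+\phi K(X,Y)$ yields $\bar{g}(X,Y)\xi-\bar{g}(Y,\xi)X$, the \emph{negative} of the right side of (2.9), and (2.7) gives $\bar{\nabla}_{X}\xi=-\phi X+\bar{g}(\bar{\nabla}_{X}\xi,\xi)\xi$ rather than (2.10). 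The theorem is stated in the convention of \cite{furuhata2017}, where $\hat{\bar{\nabla}}_{X}\xi=\phi X$ and $(\hat{\bar{\nabla}}_{X}\phi)Y=\bar{g}(Y,\xi)X-\bar{g}(X,Y)\xi$; your claim that (2.8) ``produces (2.9)'' silently flips this sign, so a correct write-up must either adopt that convention throughout or flag the clash between (2.7)--(2.8) and (2.9)--(2.10).

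The genuine gap is in the converse, precisely at the step you yourself call the main obstacle. The identity $(\hat{\bar{\nabla}}_{X}\phi)Y+\bigl(K(X,\phi Y)+\phi K(X,Y)\bigr)=\bar{g}(Y,\xi)X-\bar{g}(X,Y)\xi$ splits an operator that is $\bar{g}$-skew-adjoint in $Y$ into two skew-adjoint pieces, so the symmetries of $K$ cannot separate them, and your two auxiliary equations do not ``invert'' by themselves: evaluating the $\hat{\bar{\nabla}}$-part uses the identity $(\hat{\bar{\nabla}}_{X}\phi)\phi Y+\phi(\hat{\bar{\nabla}}_{X}\phi)Y=\bar{g}(Y,\hat{\bar{\nabla}}_{X}\xi)\xi+\eta(Y)\hat{\bar{\nabla}}_{X}\xi$, which already requires knowing $\hat{\bar{\nabla}}_{X}\xi$; and (2.10) together with $\bar{g}(K(X,\xi),Y)=\bar{g}(K(X,Y),\xi)$ does \emph{not} determine it, since (2.10) only says that $\hat{\bar{\nabla}}_{X}\xi$ plus the component of $K(X,\xi)$ orthogonal to $\xi$ equals $\phi X$, and nothing you list forces $K(X,\xi)$ to be proportional to $\xi$. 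The missing step is to first put $Y=\xi$ in (2.9): since $\phi\xi=0$ this gives $\phi\bar{\nabla}^{*}_{X}\xi=\phi^{2}X$, hence $\bar{\nabla}^{*}_{X}\xi=\phi X+\bar{g}(\bar{\nabla}^{*}_{X}\xi,\xi)\xi$; averaging with (2.10) and using $\bar{g}(\hat{\bar{\nabla}}_{X}\xi,\xi)=0$ yields $\hat{\bar{\nabla}}_{X}\xi=\phi X$ and $K(X,\xi)=\bar{g}(K(X,\xi),\xi)\xi$. Only then does your substitution scheme close: adding the ``apply $\phi$'' and ``$Y\mapsto\phi Y$'' versions of (2.9) gives $\phi K(X,\phi Y)=K(X,Y)-\eta(Y)\bar{g}(K(X,\xi),\xi)\xi$, which, using $K(X,\xi)\parallel\xi$, is equivalent to $K(X,\phi Y)+\phi K(X,Y)=0$; feeding this back into (2.9) isolates $(\hat{\bar{\nabla}}_{X}\phi)Y=\bar{g}(Y,\xi)X-\bar{g}(X,Y)\xi$, completing the Sasakian statistical axioms (again in the Furuhata sign convention). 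With this ordering and the convention fixed, your plan does go through; as written, the separation step is not justified.
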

\begin{remark}
For the dual connection $\bar{\nabla}^*$
\[
\bar{\nabla}^*_{X}\phi Y - \phi\bar{\nabla}_{X}Y = \bar g(Y, \xi)X - \bar{g}(Y,X)\xi,
\]
\[
	\bar{\nabla}^*_{X}\xi = \phi X + \bar g(\bar{\nabla}^*_{X}\xi, \xi)\xi
\]
\end{remark}
\begin{remark}
	 By setting
	\[
	K(X, Y) = g(X, \xi )g(Y, \xi )\xi
	\]
	for any $X, Y \in \Gamma(T\bar{M})$ such that $K$ satisfies (\ref{eq23}) and (\ref{eq29a}), we obtain Sasakian statistical structure $(\bar{\nabla}^{\lambda} = \widehat{\bar{\nabla}} + \lambda K, \bar{g}, \phi, \xi)$ on $\bar{M}$ where $\lambda \in C^{\infty}(\bar{M})$,
\end{remark}
Now, let $M$ be a submanifold of Sasakian statistical manifold $\bar{M}$ with induced metric $g$. Then the Gauss and Weingarten formulae are respectively given by
\begin{equation}\label{eq211}
\bar{\nabla}_XY = \nabla_XY + h(X,Y), \;\;\;\; \bar{\nabla}^*_XY = \nabla^*_XY + h^*(X,Y)
\end{equation}
\begin{equation}\label{eq212}
\bar{\nabla}_XV = -A_VX + \nabla^{\perp}_XV, \;\;\;\; \bar{\nabla}^*_XV = -A^*_VX + \nabla^{*\perp}_XV
\end{equation}  
where $\nabla$ and $\nabla^*$ are induced connections on $M$ and dual with respect to $g$. Moreover, $h$ and $h^*$ are symmetric and bilinear, called the embedding curvature tensors of $M$ in $\bar{M}$ for $\bar{\nabla}$ and $\bar{\nabla}^*$ respectively.\\

The second fundamentals forms and shape operators are related by
\begin{equation}\label{eq213}
g(A_VX,Y)= g(h^*(X,Y),V),\;\;\;\; g(A^*_VX,Y)= g(h(X,Y),V)
\end{equation}
for any $X,Y \in\Gamma(TM)$ and $V \in \Gamma(T^{\perp}M)$.\\
For any vector field $X\in\Gamma(TM)$, we have
\begin{equation}\label{eq214}
\phi X = TX + FX
\end{equation} 
where $TX$ and  $FX$ are the tangential and the normal parts respectively of $\phi X$.\\
Similarly, for $V \in \Gamma(T^{\perp}M)$, 
\begin{equation}\label{eq215}
\phi V = BV + CV
\end{equation} 
where $BV$ and $CV$ are respectively the tangential  and the normal parts of $\phi V$.\\
Also, for any $X,Y \in\Gamma(TM)$, we write
\[
g(\phi X,Y) = g(TX,Y)= -g(X,TY)
\]
For any $U,V \in \Gamma(T^{\perp}M)$, we get
\begin{equation}\label{eq216}
g(\phi U,V) = g(CU,V)= -g(U,CV)
\end{equation}
These equations imply that $T$ and $C$ are also skew symmetric tensor fields.\\
Further, for any $X \in\Gamma(TM)$ and $V \in \Gamma(T^{\perp}M)$, we have
\[
g(\phi X,V) = g(FX,V), \;\;\;\; g(X,\phi V) = g(X,BV)
\]
From equation (\ref{eq26}), we have
\begin{equation}\label{eq217}
g(FX,V) = -g(X,BV)
\end{equation}

Now, from (\ref{eq24}), (\ref{eq214}) and (\ref{eq215}), we obtain
\[
T^2X=-X+\eta(X)\xi - BFX, \;\;\;\; FTX = -CFX
\]
and
\[
C^2V = -V -FBV , \;\;\;\; TBV = -BCV
\]
From the equations (\ref{eq29}), (\ref{eq211}), (\ref{eq212}), (\ref{eq214}) and (\ref{eq215}), we have the following results for covariant derivatives of the tensor field $P$, $B$, $F$ and $C$.   
\begin{proposition}\label{prop1}
Let $M$ be a submanifold of Sasakian statistical manifold $\bar{M}$, then we have
\[
\nabla_XTY - T\nabla^*_XY = A_{FY}X + Bh^*(X,Y)+ g(Y,\xi)X - g(Y,X)\xi,
\]
\[
\nabla^{\perp}_XFY - F\nabla^*_XY = Ch^*(X,Y) - h(X,TY)
\]
\[
\nabla_XBV - B\nabla^{*\perp}_XV = A_{CV}X - TA^*_VX
\]
\[
\nabla^{\perp}_XCV - C\nabla^{*\perp}_XV = -h(X,BV) - FA^*_VX
\]
\end{proposition}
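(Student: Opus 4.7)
The plan is to derive all four identities from the single defining relation \eqref{eq29},
\[
\bar{\nabla}_{X}\phi Y - \phi\bar{\nabla}^{*}_{X}Y = \bar g(Y,\xi)X - \bar g(Y,X)\xi,
\]
by expanding both sides through the Gauss--Weingarten formulae \eqref{eq211}--\eqref{eq212} together with the decompositions \eqref{eq214}--\eqref{eq215}, and then reading off tangential and normal components. The first two identities will come from specializing $Y\in\Gamma(TM)$, while the last two will come from specializing $Y$ to be a normal vector field $V\in\Gamma(T^{\perp}M)$.

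For the first pair, I take $Y\in\Gamma(TM)$ and write $\phi Y = TY + FY$ with $TY\in\Gamma(TM)$ and $FY\in\Gamma(T^{\perp}M)$. Applying Gauss to $\bar\nabla_X TY$ and Weingarten to $\bar\nabla_X FY$ gives
\[
\bar\nabla_X\phi Y = \nabla_X TY + h(X,TY) - A_{FY}X + \nabla^{\perp}_X FY.
\]
On the other side, $\bar\nabla^{*}_X Y = \nabla^{*}_X Y + h^{*}(X,Y)$ by \eqref{eq211}, so
\[
\phi\bar\nabla^{*}_X Y = T\nabla^{*}_X Y + F\nabla^{*}_X Y + Bh^{*}(X,Y) + Ch^{*}(X,Y).
\]
Substituting into \eqref{eq29} and separating tangential and normal components (noting that the right-hand side $\bar g(Y,\xi)X - \bar g(Y,X)\xi$ is tangential since $\xi\in\Gamma(TM)$ in the contact setting) yields the first identity from the tangential part and the second from the normal part.

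For the second pair, I apply \eqref{eq29} with $Y$ replaced by $V\in\Gamma(T^{\perp}M)$ and use $\phi V = BV + CV$. Then Gauss applied to $\bar\nabla_X BV$ and Weingarten to $\bar\nabla_X CV$ give
\[
\bar\nabla_X\phi V = \nabla_X BV + h(X,BV) - A_{CV}X + \nabla^{\perp}_X CV,
\]
while Weingarten applied to $\bar\nabla^{*}_X V$ and the decomposition of $\phi$ produce
\[
\phi\bar\nabla^{*}_X V = -TA^{*}_V X - FA^{*}_V X + B\nabla^{*\perp}_X V + C\nabla^{*\perp}_X V.
\]
Since $V$ is normal and $\xi$ is tangential, the right-hand side of \eqref{eq29} vanishes, and equating tangential and normal components yields the third and fourth identities respectively.

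The computation is essentially bookkeeping; the only conceptual point to check carefully is the vanishing of the $\bar g(V,\xi)X - \bar g(V,X)\xi$ term in the normal-vector case, which relies on $\xi\in\Gamma(TM)$ (the standing convention for contact submanifolds of a Sasakian statistical manifold used throughout the paper). The main obstacle is therefore not mathematical depth but careful segregation of tangential and normal components of several overlapping expressions; once this bookkeeping is organized, the four identities drop out of a single application of \eqref{eq29} in each of the two cases.
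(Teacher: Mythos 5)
Your proposal is correct and follows exactly the route the paper indicates (the paper simply cites equations (\ref{eq29}), (\ref{eq211}), (\ref{eq212}), (\ref{eq214}) and (\ref{eq215}) and omits the details): expand the Sasakian statistical identity via Gauss--Weingarten and the $T,F,B,C$ decompositions, then compare tangential and normal parts, with the $\bar g(V,\xi)X-\bar g(V,X)\xi$ term vanishing because $\xi$ is tangent to $M$. Nothing further is needed.
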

\begin{remark}
Since $M$ is tangent to $\xi$, then from equations (\ref{eq210}), (\ref{eq211}) and (\ref{eq214}), we get
\[
\nabla_X\xi = TX + g(\nabla_X\xi, \xi)\xi \quad, \;\;\;\; h(X,\xi) = F(X).
\]
\end{remark}
\section{Contact CR-submanifolds of Sasakian statistical manifold}
Inspired from \cite{Dirik2020}, in this section, we define contact $CR$-submanifolds in a Sasakian statistical manifold and develop some properties on its structure.\\

Let $(M,g)$ be a submanifold of a Sasakian statistical manifold $\bar{M}$ and $(\phi, \xi, \eta)$ be an almost contact structure on $\bar{M}$. Then $M$ is called an invariant submanifold if $\phi(T_xM)\subseteq T_xM$ $\forall \;x \in M$, Further, $M$ is said to be an anti-invariant submanifold if $\phi(T_xM)\subseteq T^{\perp}_xM$ $\forall x \in M$. Similarly, it can be easily seen that a submanifold of an almost contact mertric manifold $\bar{M}$ is said to be invariant (or anti-invariant), if $F$ (or $T$) are identically zero.
\begin{definition}
A submanifold of a Sasakian statistical manifold $\bar M$ is said to be a contact $CR$-submanifold, if there exists on $M$, a differentiable invariant distribution $D$ whose complementary orthogonal distribution $D^{\perp}$ is anti invariant i.e.,\\
(i) $TM = D\oplus D^{\perp}$, $\xi \in \Gamma(D)$\\
(ii) $\phi(D_x) = D_x$\\
(iii) $\phi(D^{\perp}_x) \subseteq T^{\perp}_xM$, for each $x \in M$.\\
A contact $CR$-submanifold is called anti-invariant ( or totally real) if $D_x = 0$ and invariant (or holomorphic) if $ D^{\perp}_x =0$, respectively, for any $x \in M$. It is called proper contact $CR$ - submanifold if neither $D_x = 0$ nor $ D^{\perp}_x = 0$.
\end{definition}
Consider the orthogonal projections $P_1$ and $P_2$ on $D$ and $D^{\perp}$ respectively. Then 
\[
X= P_1X + P_2 X +\eta(X)\xi
\] 
for any $X \in \Gamma(TM)$, where $P_1X \in \Gamma(D)$ and $P_2X \in \Gamma(D^{\perp})$. From (\ref{eq214}), we derive
\[
\phi X = TX + FX \]\[ \phi P_1X + \phi P_2 X= T P_1X + F P_1X + T P_2X + F P_2X
\]
\[
F P_1 = 0,\;\; T P_2 = 0,\;\; F= F P_2,\;\; T= T P_1
\]
We denote the orthogonal subbundle $\phi D^{\perp}$ in $T^{\perp}M$ by $\nu$. Then, we have
\[
T^{\perp}M= \phi D^{\perp} \oplus \nu, \;\; \phi D^{\perp} \perp \nu.
\]
Here, we note that $\nu$ is invariant subbundle with respect to $\phi$.\\

Let $M$ be a contact $CR$-submanifold of a Sasakian statistical manifold. Then we have
\[
-A_{\phi Z}U + \nabla^{\perp}_U \phi Z = \phi(\nabla^*_UZ + h^*(U,Z)) - g(Z,U)\xi
\]
for any vector field $U$ tangent to $M$ and $Z$ in $D^{\perp}$. By virtue of above equation, we get
\[
g(-A_{\phi Z}U,X) = g(\phi\nabla^*_UZ,X) -g(Z,U)g(\xi,X)
\]
for any vector field $U$ tangent to $M$, $X$ in $D$ and $Z$ in $D^{\perp}$. Now, from equation (\ref{eq26}), we obtain
\begin{equation}\label{eq31}
g(A_{\phi Z}U,X) = g(\nabla^*_UZ,\phi X)+\eta(X)g(Z,U)
\end{equation}
\begin{proposition}
For a contact CR-submanifold $M$ of a Sasakian statistical manifold $\bar{M}$ such that $\xi \in \Gamma(D)$, we have
\[
A_{FY} Z = A_{FZ}Y\;\; and \;\; A^*_{FY} Z = A^*_{FZ}Y,\;\; \forall\; Y, Z \in \Gamma(D^{\perp})
\] 
\end{proposition}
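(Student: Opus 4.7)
The plan is to exploit equation (\ref{eq31}). Substitute $U = Y$ into (\ref{eq31}), and separately apply (\ref{eq31}) with the roles of $Y$ and $Z$ swapped: for any $X \in \Gamma(D)$,
\[
g(A_{FZ}Y, X) = g(\nabla^*_Y Z, \phi X) + \eta(X)\, g(Z, Y), \qquad g(A_{FY}Z, X) = g(\nabla^*_Z Y, \phi X) + \eta(X)\, g(Y, Z).
\]
Subtracting cancels the $\eta(X)$-terms by symmetry of $g$, and since $\nabla^*$ is torsion-free (inherited from $\bar\nabla^*$), the right-hand side collapses to $g([Y, Z], \phi X)$.

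To capture the $\Gamma(D^\perp)$- and $\xi$-components, I would specialize the first equation of Proposition \ref{prop1} to $X, Y \in \Gamma(D^\perp)$ to extract the tangential formula
\[
A_{FZ}U = -T\nabla^*_U Z - B h^*(U, Z) + g(Z, U)\,\xi \qquad (U \in \Gamma(D^\perp)).
\]
Swapping $Y$ and $Z$ in this formula and subtracting, the $B h^*$- and scalar-$\xi$-terms cancel by symmetry of $h^*$ and $g$, yielding $A_{FY}Z - A_{FZ}Y = T[Y, Z]$. In particular the difference lies in $\Gamma(D)$, so its $\Gamma(D^\perp)$- and $\xi$-components vanish automatically; the entire identity reduces to the single claim $T[Y, Z] = 0$.

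The main obstacle is this last step. Since $T$ restricts to $\phi$ on $D \cap \xi^\perp$ and annihilates the rest, $T[Y, Z] = 0$ is equivalent to $[Y, Z]$ carrying no $D \cap \xi^\perp$-component---an integrability-type claim for the anti-invariant distribution. I would attack it by combining (\ref{eq29}) with its dual from the remark after the characterization theorem, together with (\ref{eq210}) and the compatibility $K(X, \phi Y) + \phi K(X, Y) = 0$ from Definition \ref{def2.6}, to pin down and eliminate the $D \cap \xi^\perp$-content of $[Y, Z]$.

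The companion identity $A^*_{FY}Z = A^*_{FZ}Y$ is proved by running the entire argument with $\bar\nabla$ and $\bar\nabla^*$ interchanged throughout. Using the dual of (\ref{eq29}) produces a starred analogue of (\ref{eq31}), namely $g(A^*_{FZ}U, X) = g(\nabla_U Z, \phi X) + \eta(X)\, g(Z, U)$ for $X \in \Gamma(D)$, and repeating the subtraction and tangential-decomposition steps with $\nabla$ in place of $\nabla^*$ reduces this identity to exactly the same integrability claim, which is handled by the same argument.
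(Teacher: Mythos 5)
Your two reductions are individually correct: subtracting the two instances of (\ref{eq31}) gives $g(A_{FZ}Y-A_{FY}Z,X)=g([Y,Z],\phi X)$ for $X\in\Gamma(D)$, and specializing the first identity of Proposition \ref{prop1} to $D^{\perp}$ (where $TZ=0$, $\eta(Z)=0$) and antisymmetrizing gives $A_{FY}Z-A_{FZ}Y=T[Y,Z]$. But this is exactly where the proof stops: the entire content of the proposition is now the claim $T[Y,Z]=0$, and you do not prove it — you only sketch that you ``would attack it'' by combining (\ref{eq29}), its dual, (\ref{eq210}) and (\ref{eq29a}). That deferred claim is not an auxiliary fact; by your own first display it is equivalent to the $D$-component of the identity you are trying to establish (and the analogous computation with the roles of $\bar\nabla$ and $\bar\nabla^{*}$ exchanged shows $g([Y,Z],\phi X)=g(A^{*}_{FZ}Y-A^{*}_{FY}Z,X)$, so it is likewise equivalent to the starred half, which you also propose to reduce to it). In other words, you have shown that the shape-operator difference equals $T[Y,Z]$, not that either quantity vanishes; as organized, the argument is circular, and every natural attempt to evaluate $g([Y,Z],\phi X)$ through (\ref{eq29}) or its dual just regenerates the same shape-operator differences.

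The missing step is precisely the direct computation the paper performs, which never passes through $[Y,Z]$ at all: for arbitrary $U\in\Gamma(TM)$ and $Y,Z\in\Gamma(D^{\perp})$, Gauss--Weingarten together with (\ref{eq26}) and (\ref{eq213}) give $\bar g(\bar\nabla_{U}\phi Z-\phi\bar\nabla^{*}_{U}Z,\,Y)=\bar g(A_{FY}Z,U)-\bar g(A_{FZ}Y,U)$, while (\ref{eq29}) evaluates the left-hand side as $\bar g(Z,\xi)\bar g(U,Y)-\bar g(Z,U)\bar g(\xi,Y)$, which vanishes because $\xi\in\Gamma(D)$ forces $\eta(Y)=\eta(Z)=0$; the starred identity follows by the dual relation. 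If you want to keep your route, you must first prove $T[Y,Z]=0$ by some such direct argument (statistical or via the Levi--Civita connection and the classical Yano--Kon symmetry $\hat A_{\phi Z}Y=\hat A_{\phi Y}Z$); once that is done your reduction becomes a valid, if roundabout, proof — but as submitted the key step is absent.
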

\begin{proof} For any $Y, Z \in \Gamma(D^{\perp})$ and $ U \in \Gamma(TM)$ alongwith equations (\ref{eq22}), (\ref{eq26}),(\ref{eq211}) and (\ref{eq213}), we get
\[
\bar g(\bar\nabla_U \phi Z - \phi \bar{\nabla}^*_UZ,Y )= \bar g(A_{FY}Z,U) - \bar g(A_{FZ}Y,U)
\]
By using Sasakian statistical maniofold the above equation implies 
\[
\bar g(A_{FY}Z,U) - \bar g(A_{FZ}Y,U)= 0
\]
Hence the result. Similarly, the corresponding case for the dual connection also holds.
\end{proof}
\begin{proposition}
Let $M$ be a contact CR-submanifold of a Sasakian statistical manifold. Then we have
\[
A^*_{U} BV = A^*_{V}BU
\]
 $\forall ~~ U,V \in \Gamma(T^{\perp}M)$ if and only if
 \[
 \nabla^{\perp}_X CV = C \nabla^{*\perp}_X V
\] 
for any $X \in \Gamma(TM)$ and $V \in \Gamma(T^{\perp}M)$
\end{proposition}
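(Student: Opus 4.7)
The plan is to use the fourth covariant-derivative identity in Proposition \ref{prop1} to rewrite the hypothesis $\nabla^{\perp}_X CV = C\nabla^{*\perp}_X V$ in terms of the shape operator $A^*$, and then verify by a duality argument with a test normal vector that the resulting identity is equivalent to $A^*_U BV = A^*_V BU$.

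First, Proposition \ref{prop1} gives
\[
\nabla^{\perp}_X CV - C\nabla^{*\perp}_X V = -h(X, BV) - FA^*_V X,
\]
so the condition on the right-hand side of the statement is equivalent to $h(X, BV) + FA^*_V X = 0$ for every $X \in \Gamma(TM)$ and $V \in \Gamma(T^\perp M)$; call this $(\star)$. Since $h(X, BV) + FA^*_V X$ lies in $\Gamma(T^\perp M)$, $(\star)$ holds if and only if its $g$-inner product with every $U \in \Gamma(T^\perp M)$ vanishes.

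The key step is to expand this inner product. By (\ref{eq213}) applied with the normal test vector $U$, $g(h(X, BV), U) = g(A^*_U X, BV)$; and the symmetry of $A^*_U$ as an operator on $TM$ (an immediate consequence of the symmetry of $h$ together with (\ref{eq213})) then gives $g(A^*_U X, BV) = g(A^*_U BV, X)$. Next, (\ref{eq217}) yields $g(FA^*_V X, U) = -g(A^*_V X, BU) = -g(A^*_V BU, X)$. Summing,
\[
g\bigl(h(X, BV) + FA^*_V X,\ U\bigr) = g\bigl(A^*_U BV - A^*_V BU,\ X\bigr).
\]
As $X$ ranges over $\Gamma(TM)$ and $U$ over $\Gamma(T^\perp M)$, the left side vanishes identically exactly when $A^*_U BV = A^*_V BU$ for all $U, V \in \Gamma(T^\perp M)$, which is the desired equivalence.

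I do not expect a genuine obstacle. The argument is purely formal once Proposition \ref{prop1} is in hand: the only care needed is to remember the sign in (\ref{eq217}) and to invoke the symmetry of $A^*_U$ before swapping arguments inside $g$. No property specific to the $CR$-structure beyond $BV \in \Gamma(TM)$ and $CV \in \Gamma(T^\perp M)$, which are immediate from (\ref{eq215}), is required.
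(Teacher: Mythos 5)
Your argument is correct and matches the paper's own proof: both take the fourth identity of Proposition \ref{prop1}, pair it with an arbitrary normal vector $U$, and convert the terms via (\ref{eq213}), (\ref{eq217}) and the self-adjointness of $A^*$ into $g(A^*_U BV - A^*_V BU, X)$, from which the equivalence follows since $A^*_U BV - A^*_V BU$ is tangent to $M$. No discrepancies to report.
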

\begin{proof} From proposition (\ref{prop1}), (\ref{eq213}) and (\ref{eq217}),   we have
 \begin{eqnarray}
g(\nabla^{\perp}_X CV, U) -g(C\nabla^{*\perp}_XV,U)&=& -g(A^*_U BV,X) + g(A^*_VX,BU)\nonumber \\
&=& -g(A^*_U BV,X) + g(A^*_V BU,X) \nonumber
\end{eqnarray}
Using the self adjoint property of $A^*$, our assertion follows.
\end{proof}
\begin{proposition}
For a contact CR-submanifold $M$ of a Sasakian statistical manifold $\bar{M}$, we have
\[
\nabla^{\perp}_X FY = F \nabla^{*}_X Y
\]
 for any $X,Y \in \Gamma(TM)$ iff
 \[
 \nabla_X BV = B \nabla^{*\perp}_X V
\] 
for any $X \in \Gamma(TM)$ and $V \in \Gamma(TM^{\perp})$.
\end{proposition}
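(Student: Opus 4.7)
The plan is to reduce both sides of the equivalence to purely algebraic conditions by applying the last two identities of Proposition \ref{prop1}. Specifically, the identity
\[
\nabla^{\perp}_{X}FY - F\nabla^{*}_{X}Y = Ch^{*}(X,Y) - h(X,TY)
\]
shows that the first condition holds for all $X,Y\in\Gamma(TM)$ if and only if $Ch^{*}(X,Y) = h(X,TY)$, while the identity
\[
\nabla_{X}BV - B\nabla^{*\perp}_{X}V = A_{CV}X - TA^{*}_{V}X
\]
shows that the second condition holds for all $X\in\Gamma(TM)$ and $V\in\Gamma(T^{\perp}M)$ if and only if $A_{CV}X = TA^{*}_{V}X$. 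So the proposition reduces to proving the equivalence of these two tensorial relations.

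Next I would test both sides against an arbitrary normal vector $V$ and tangent vector $Y$. Using the skew-symmetry of $C$ with respect to $g$ from (\ref{eq216}) and the duality relation $g(A_{V}X,Y) = g(h^{*}(X,Y),V)$ from (\ref{eq213}), I get
\[
g\!\left(Ch^{*}(X,Y),V\right) = -g\!\left(h^{*}(X,Y),CV\right) = -g\!\left(A_{CV}X,Y\right),
\]
and by the skew-symmetry of $T$ together with the other duality relation $g(h(X,TY),V) = g(A^{*}_{V}X, TY)$,
\[
g\!\left(h(X,TY),V\right) = g\!\left(A^{*}_{V}X,TY\right) = -g\!\left(TA^{*}_{V}X,Y\right).
\]
Subtracting gives $g(Ch^{*}(X,Y) - h(X,TY),V) = -g(A_{CV}X - TA^{*}_{V}X, Y)$, which is exactly the bridge needed: the vanishing of one side for all admissible inputs is equivalent to the vanishing of the other. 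This yields the desired iff statement.

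The main obstacle I anticipate is bookkeeping rather than conceptual: one has to be careful that the version of Proposition \ref{prop1} used matches the correct pairing of connections ($\nabla$ with $h^{*}$ in $A$ via (\ref{eq213}) and $\nabla^{*}$ with $h$ via $A^{*}$), and to apply the skew-symmetry of $T$ and $C$ with the correct signs so that the two algebraic identities genuinely line up. Once these sign conventions are fixed, the argument is short and essentially symmetric in the two implications, so no separate treatment of ``only if'' versus ``if'' should be required.
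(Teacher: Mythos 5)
Your proposal is correct and follows essentially the same route as the paper: both reduce the statement via the last two identities of Proposition \ref{prop1} and link the resulting expressions through the duality relations (\ref{eq213}) together with the skew-symmetry of $T$ and $C$ from (\ref{eq216}), obtaining $g\bigl(\nabla^{\perp}_X FY - F\nabla^{*}_X Y, V\bigr) = -g\bigl(\nabla_X BV - B\nabla^{*\perp}_X V, Y\bigr)$ up to organization of the computation. The only difference is presentational (you split the argument into a reduction plus a bridging identity, while the paper writes one chain of equalities), so no further comment is needed.
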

\begin{proof} Using proposition (\ref{prop1}) and  equations (\ref{eq213}) and (\ref{eq216}),  we have
\begin{eqnarray}
g(\nabla^{\perp}_X FY, V) -g(F\nabla^*_XY,V) &=& -g(h^*(X,Y),CV) + g(A^*_VX,TY) \nonumber\\
&=& -g(A_{CV} X,Y) + g(TA^*_VX,Y) \nonumber \\&=& -g(\nabla_X BV,Y) + g(B \nabla^{*\perp}_X V,Y)\nonumber
\end{eqnarray}
from the skew symmetric property  of $T$ and proposition (\ref{prop1}). Thus our assertion follows.

\end{proof}

\begin{theorem}\label{th35}
Let $M$ be a contact CR-submanifold of a Sasakian statistical manifold $\bar{M}$. Then the invariant distribution $D$ is integrable  if and only if the second fundamental form of $M$ satisfies:
\[
\bar{g}(h(X,\phi Y),\phi Z) = \bar{g}( h(Y,\phi X),\phi Z)
\]
for any $X, Y \in \Gamma(D)$ and $Z \in \Gamma (D^{\perp})$.
\end{theorem}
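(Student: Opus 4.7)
The plan is to translate integrability of $D$ into a pointwise condition on $[X,Y]$ and then rewrite that condition using the defining Sasakian statistical identity in Proposition 2.7 together with the Gauss formula. Since $D\oplus D^{\perp}=TM$ with $\xi\in\Gamma(D)$, the distribution $D$ is involutive if and only if $g([X,Y],Z)=0$ for every $X,Y\in\Gamma(D)$ and $Z\in\Gamma(D^{\perp})$. So the whole task reduces to computing $\bar g([X,Y],Z)$ and producing the asserted symmetry of $h$.

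The key move is to apply equation (\ref{eq29}) with the roles of $X,Y$ swapped. For $X,Y\in\Gamma(D)$ one has $\phi Y,\phi X\in\Gamma(D)\subset\Gamma(TM)$, and writing (\ref{eq29}) once for the pair $(X,Y)$ and once for $(Y,X)$ and subtracting yields
\[
\bar\nabla_{X}\phi Y-\bar\nabla_{Y}\phi X
=\phi\bigl(\bar\nabla^{*}_{X}Y-\bar\nabla^{*}_{Y}X\bigr)+\bar g(Y,\xi)X-\bar g(X,\xi)Y.
\]
Because $\bar\nabla^{*}$ is torsion-free, the bracket on the right reduces to $\phi[X,Y]$. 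Now pair both sides with $\phi Z$, $Z\in\Gamma(D^{\perp})$. On the right, $\bar g(\phi[X,Y],\phi Z)=\bar g([X,Y],Z)-\eta([X,Y])\eta(Z)$, and $\eta(Z)=\bar g(Z,\xi)=0$ since $Z\perp D\ni\xi$; the remaining two terms $\bar g(Y,\xi)\bar g(X,\phi Z)$ and $\bar g(X,\xi)\bar g(Y,\phi Z)$ vanish because $X,Y$ are tangent to $M$ while $\phi Z$ lies in $T^{\perp}M$.

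For the left-hand side I use the Gauss formula (\ref{eq211}): $\bar\nabla_{X}\phi Y=\nabla_{X}\phi Y+h(X,\phi Y)$ and analogously for the other term. The tangential parts $\nabla_{X}\phi Y,\nabla_{Y}\phi X$ are orthogonal to $\phi Z\in T^{\perp}M$, so only the second fundamental forms survive, giving
\[
\bar g([X,Y],Z)=\bar g\bigl(h(X,\phi Y),\phi Z\bigr)-\bar g\bigl(h(Y,\phi X),\phi Z\bigr).
\]
The equivalence in the theorem follows immediately by quantifying over $Z\in\Gamma(D^{\perp})$.

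I do not expect any genuine obstacle; the argument is a controlled bookkeeping of tangential versus normal components. The only point that needs care is verifying that the three ``spurious'' scalar terms produced by the dualized Sasakian identity and by the metric compatibility really drop out, and this is where the hypotheses $\xi\in\Gamma(D)$ and $\phi(D^{\perp})\subset T^{\perp}M$ are essential: they are precisely what force $\eta(Z)=0$ and $\bar g(X,\phi Z)=\bar g(Y,\phi Z)=0$.
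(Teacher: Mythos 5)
Your proposal is correct and follows essentially the same route as the paper: antisymmetrize (\ref{eq29}) over $X,Y$, apply the Gauss formula (\ref{eq211}), use torsion-freeness of the dual connection to produce $\phi[X,Y]$, and then isolate the component along $\phi D^{\perp}$. If anything, your step of pairing directly with $\phi Z$ and testing $\bar g([X,Y],Z)$ makes the stated equivalence slightly more explicit than the paper's passage through the identity $F[X,Y]=h(X,\phi Y)-h(Y,\phi X)$.
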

\begin{proof} For any vector field $X, Y \in \Gamma(D)$, using equations (\ref{eq211}) in (\ref{eq29}), we obtain
\[
\nabla_X \phi Y + h(X,\phi Y) - \phi\nabla^*_XY - \phi h^*(X,Y) = g(Y,\xi)X - g(Y,X)\xi
\]
Now, interchanging $X$ and $Y$ and then by subtracting, we get
\[
\nabla_X \phi Y - \nabla_Y \phi X +  h(X,\phi Y) -  h(Y,\phi X) - \phi\nabla^*_XY + \phi\nabla^*_YX = g(Y,\xi)X - g(X,\xi)Y
\]
\[
\phi [X,Y] = \nabla_X \phi Y - \nabla_Y \phi X +  h(X,\phi Y) -  h(Y,\phi X)- g(Y,\xi)X + g(X,\xi)Y
\]
Using (\ref{eq214}), and on comparing the normal components, we have
\[
F[X,Y] = h(X,\phi Y) -  h(Y,\phi X)
\]
Thus $D$ is integrable if and only if $h(X,\phi Y) =  h(Y,\phi X)$.
Also, 
\[
\bar{g}(h(X,\phi Y),\phi Z) = \bar{g}( h(Y,\phi X),\phi Z)
\]
for any $X, Y \in \Gamma(D)$ and $Z \in \Gamma (D^{\perp})$.
\end{proof}
\begin{theorem}
Let $M$ be a contact CR-submanifold of a Sasakian statistical manifold $\bar{M}$. Then the anti- invariant distribution $D^{\perp}$ is integrable  if and only if 
\[
A_{\phi Y}X - A_{\phi X}Y = g(Y,\xi)X - g(X,\xi)Y
\]
for any $X, Y \in \Gamma(D^{\perp})$.
\end{theorem}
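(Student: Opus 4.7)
The plan is to mirror the approach used in Theorem \ref{th35}. I would let $X,Y\in\Gamma(D^\perp)$, so that $\phi X,\phi Y\in\Gamma(T^\perp M)$, and substitute into (\ref{eq29}). Since $\phi Y$ is normal, I would expand $\bar\nabla_X\phi Y$ via the Weingarten formula (\ref{eq212}) and $\bar\nabla^*_XY$ via the Gauss formula (\ref{eq211}), obtaining
\[
-A_{\phi Y}X+\nabla^\perp_X\phi Y-\phi\nabla^*_XY-\phi h^*(X,Y)=g(Y,\xi)X-g(Y,X)\xi.
\]

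Next I would swap the roles of $X$ and $Y$ in this identity and subtract. The $h^*(X,Y)$ and $g(Y,X)\xi$ terms cancel by symmetry of the second fundamental form and of $g$, while $\nabla^*_XY-\nabla^*_YX$ collapses to $[X,Y]$ because the induced dual connection on $M$ is torsion-free. Decomposing $\phi[X,Y]=T[X,Y]+F[X,Y]$ via (\ref{eq214}) and comparing tangential components produces
\[
A_{\phi X}Y-A_{\phi Y}X-T[X,Y]=g(Y,\xi)X-g(X,\xi)Y.
\]

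The final step is to relate $T[X,Y]=0$ to integrability of $D^\perp$. Here I expect the main obstacle: since $T$ vanishes on $D^\perp$ and also annihilates $\xi$, the condition $T[X,Y]=0$ only forces the $D$-component of $[X,Y]$ to lie in $\operatorname{span}(\xi)$, so I would need to verify separately that $\eta([X,Y])=0$ holds automatically for $X,Y\in\Gamma(D^\perp)$. This reduces, via (\ref{eq22}) applied with $Z=\xi$, the dual version of (\ref{eq210}), and $g(Y,\xi)=0$, to the identity $g(\bar\nabla_XY,\xi)=-g(Y,\phi X)$; since $\phi X\in T^\perp M$ while $Y$ is tangent, this vanishes, and antisymmetrising in $X,Y$ gives $g([X,Y],\xi)=0$. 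With the $\xi$-component under control, $T[X,Y]=0$ becomes equivalent to $[X,Y]\in\Gamma(D^\perp)$, and rearranging the tangential identity then yields the stated criterion.
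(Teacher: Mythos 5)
Your proposal is correct and follows essentially the paper's own route: substitute the Gauss and Weingarten formulae into (\ref{eq29}) for $X,Y\in\Gamma(D^\perp)$, antisymmetrize in $X$ and $Y$, and compare tangential parts to get $A_{\phi X}Y-A_{\phi Y}X-T[X,Y]=g(Y,\xi)X-g(X,\xi)Y$. Two remarks. First, your sign is the correct one: the paper's intermediate display puts $-A_{\phi Y}X$ on the wrong side and so ends with $A_{\phi Y}X-A_{\phi X}Y=T[X,Y]+g(Y,\xi)X-g(X,\xi)Y$, but the discrepancy is immaterial here because $g(X,\xi)=g(Y,\xi)=0$ for $X,Y\in\Gamma(D^\perp)$, so the stated criterion reduces to $A_{\phi Y}X=A_{\phi X}Y$ either way. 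Second, your closing step establishing $\eta([X,Y])=0$ (via (\ref{eq22}) with $Z=\xi$, the dual of (\ref{eq210}), and the fact that $\phi X$ is normal) is a correct and genuinely needed addition: the paper passes directly from $T[X,Y]=0$ to $[X,Y]\in\Gamma(D^\perp)$, which, as you observe, requires controlling the $\xi$-component of $[X,Y]$ separately since $T$ annihilates $\xi$.
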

\begin{proof} For any vector field $X, Y \in \Gamma(D^{\perp})$ and from (\ref{eq212}) 
\[
\bar \nabla _X \phi Y = -A_{\phi Y}X + \nabla^{\perp} _X \phi Y
\] 
The equations (\ref{eq212}), (\ref{eq214}), (\ref{eq215}) and (\ref{eq29}) yield that
\[
\nabla^{\perp}_X \phi Y = -A_{\phi Y}X + T\nabla^*_X Y + F\nabla^*_X Y + Bh^*(X,Y) + Ch^*(X,Y) + g(Y, \xi)X - g(Y,X)\xi
\]
\end{proof}
By equating tangential components on both sides, we obtain
\[
A_{\phi Y}X  = T\nabla^*_X Y + Bh^*(X,Y)+ g(Y, \xi)X - g(Y,X)\xi
\]
On interchanging $X$ and $Y$ in above equation and then subtracting, we conclude that
\[
A_{\phi Y}X - A_{\phi X}Y = T[X,Y] + g(Y,\xi)X - g(X,\xi)Y
\]
Thus the proof follows from the hypothesis.\\

\begin{definition}
	A contact $CR$-submanifold $M$ of a Sasakian statistical manifold $\bar{M}$ is called ${D}$ - totally geodesic contact $CR$- submanifold with respect to $\bar{\nabla}$ (respectively $\bar{\nabla}^{*}$) if its second fundamental form  satisfies $h(X,Y)=0$ (respectively $h^{*}(X,Y)=0$), for any $X,Y$ $\in$ $\Gamma({D})$.
\end{definition}
\begin{definition}
		A contact $CR$-submanifold $M$ of a Sasakian statistical manifold $\bar{M}$ is called ${D^{\perp}}$ - totally geodesic contact $CR$- submanifold with respect to $\bar{\nabla}$ (respectively $\bar{\nabla}^{*}$) if its second fundamental form  satisfies $h(X,Y)=0$ (respectively $h^{*}(X,Y)=0$), for any $X,Y$ $\in$ $\Gamma({D^{\perp}})$.
\end{definition}
\begin{theorem}\label{theorem1}
Let $M$ be a contact CR-submanifold of a Sasakian statistical manifold $\bar{M}$. Then $M$ is $D$-totally geodesic w.r.t $\bar{\nabla}$ (resp. $\bar{\nabla}^*$) if and only if $A_VX \in \Gamma(D^{\perp})$ (resp. $A^*_VX \in \Gamma(D^{\perp}))$ for each $X \in D$ and $V$ a normal vector field to $M$.
\end{theorem}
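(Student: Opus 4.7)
The plan is to reduce the statement to a direct consequence of the adjoint-type identity (\ref{eq213}), which in the statistical setting pairs $A_V$ with $h^*$ and $A^*_V$ with $h$. For any tangent vector $W \in \Gamma(TM)$, membership in $\Gamma(D^\perp)$ is by definition equivalent to $g(W,Y)=0$ for every $Y \in \Gamma(D)$ (noting that $\xi \in \Gamma(D)$ is already covered by this pairing), so the shape-operator condition can be tested by pairing $A_V X$ against an arbitrary $Y \in \Gamma(D)$.

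Applying (\ref{eq213}) translates this pairing into $g(h^*(X,Y), V)$, while the companion identity $g(A^*_V X, Y) = g(h(X,Y), V)$ handles the dual assertion. Since the second fundamental form takes values in $\Gamma(T^\perp M)$ and $V$ ranges freely over all normal fields, requiring the pairing to vanish for every $V \in \Gamma(T^\perp M)$ is equivalent to pointwise vanishing of the corresponding second fundamental form on $D \times D$, which is precisely the $D$-totally geodesic condition. Read forward this gives the implication from $D$-totally-geodesicity to the shape-operator condition, and read backward it gives the converse; both directions are established by the same chain of equivalences.

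The argument is therefore a routine unwrapping of (\ref{eq213}), and no real obstacle arises. The only subtlety is bookkeeping the starred versus unstarred pairings, since the cross-pairing characteristic of the statistical setting ($A_V$ adjoint to $h^*$ rather than to $h$, and similarly for the duals) differs from the Riemannian case; writing the $\bar{\nabla}$ and $\bar{\nabla}^*$ cases in parallel throughout keeps this straight.
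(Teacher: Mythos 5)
Your proposal is correct and follows essentially the same route as the paper: test $A_VX$ (resp.\ $A^*_VX$) against an arbitrary $Y\in\Gamma(D)$, invoke (\ref{eq213}), and use that $V$ ranges over all of $\Gamma(T^{\perp}M)$ to pass between the shape-operator condition and the vanishing of the relevant second fundamental form on $D\times D$. One point deserves emphasis, and it is exactly the ``subtlety'' you flag: carried out faithfully, (\ref{eq213}) gives $g(A_VX,Y)=g(h^*(X,Y),V)$ and $g(A^*_VX,Y)=g(h(X,Y),V)$, so your chain of equivalences actually yields the \emph{crossed} pairing: $h|_{D\times D}=0$ (i.e.\ $D$-totally geodesic w.r.t.\ $\bar{\nabla}$) is equivalent to $A^*_VX\in\Gamma(D^{\perp})$, while $h^*|_{D\times D}=0$ is equivalent to $A_VX\in\Gamma(D^{\perp})$. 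This is the opposite of the ``resp.'' labelling in the statement as printed, and the paper's own proof glosses over the issue: from ``$M$ is $D$-totally geodesic w.r.t.\ $\bar{\nabla}$'' it concludes $g(A_VX,Y)=0$ ``from (\ref{eq213})'', which tacitly uses $g(A_VX,Y)=g(h(X,Y),V)$ rather than what (\ref{eq213}) says. So your argument is sound and in fact more careful than the paper's; when writing it up, state explicitly which shape operator corresponds to which connection, since the statement's pairing needs to be swapped (or else the convention in (\ref{eq213}) does) for the theorem to read correctly.
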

\begin{proof}
Let $M$ be totally geodesic  w.r.t $\bar{\nabla}$. From (\ref{eq213}), it follows that
\[
 g(A_V X,Y) = 0
\]
This implies that $A_VX \in \Gamma(D^{\perp})$ for each $X \in D$, $V$ is a normal vector field to $M$.\\

Conversely, suppose that $A_VX \in \Gamma(D^{\perp})$. Then by the similar assertion,  for $X,Y \in D$,\;\;$g(h(X,Y),V) =0$
. Hence $M$ is totally geodesic  w.r.t $\bar{\nabla}$. Thus the result holds for  $\bar{\nabla}^{*}$ also.
\end{proof}
Similarly, we have the following result:
\begin{theorem}
Let $M$ be a contact CR-submanifold of a Sasakian statistical manifold $\bar{M}$. Then $M$ is $D^{\perp}$-totally geodesic w.r.t $\bar{\nabla}$ (resp. $\bar{\nabla}^*$) if and only if $A_VX \in \Gamma(D)$ (resp. $A^*_VX \in \Gamma(D)$ )for each $X \in D^{\perp}$, $V$ a normal vector field to $M$.
\end{theorem}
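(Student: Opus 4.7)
The strategy mirrors the proof of Theorem \ref{theorem1} with the roles of $D$ and $D^{\perp}$ interchanged. The plan is to translate the $D^{\perp}$-totally geodesic condition $h(X,Y)=0$ (for all $X,Y \in \Gamma(D^{\perp})$) into a vanishing-inner-product statement about the shape operator via the duality relation (\ref{eq213}), and then exploit the orthogonal splitting $TM = D \oplus D^{\perp}$ supplied by the definition of a contact $CR$-submanifold to upgrade the vanishing to a containment of the shape operator image in $D$.

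More concretely, for the forward direction, I would assume $h(X,Y)=0$ for every $X,Y \in \Gamma(D^{\perp})$ and pair this against an arbitrary $V \in \Gamma(T^{\perp}M)$. Applying (\ref{eq213}) then yields $g(A_V X, Y) = 0$ for all $X, Y \in \Gamma(D^{\perp})$ and all normal $V$. Because $TM$ decomposes orthogonally as $D \oplus D^{\perp}$, the vanishing of $g(A_V X, Y)$ for every $Y \in \Gamma(D^{\perp})$ forces $A_V X \in \Gamma(D)$, which is precisely the asserted conclusion.

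For the converse, I would reverse the same chain: assuming $A_V X \in \Gamma(D)$ for every $X \in \Gamma(D^{\perp})$ and every normal $V$, equation (\ref{eq213}) gives $g(h(X,Y), V) = g(A_V X, Y) = 0$ for all $Y \in \Gamma(D^{\perp})$, and nondegeneracy of $\bar{g}$ on the normal bundle forces $h(X,Y) = 0$, establishing $D^{\perp}$-total geodesicity with respect to $\bar{\nabla}$. The dual-connection case $\bar{\nabla}^{*}$ is obtained by swapping the pairs $(h, A)$ and $(h^{*}, A^{*})$ and applying the second identity in (\ref{eq213}), exactly as in Theorem \ref{theorem1}.

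The main obstacle is book-keeping rather than mathematics: one has to keep careful track of whether (\ref{eq213}) is applied as $g(A_V X, Y) = g(h^{*}(X,Y), V)$ or as $g(A^{*}_V X, Y) = g(h(X,Y), V)$, so that the shape operator named in the conclusion is paired with the same connection ($\bar{\nabla}$ or $\bar{\nabla}^{*}$) as in the hypothesis. Once that matching is fixed consistently with the conventions used in Theorem \ref{theorem1}, the proof is a direct transposition of the argument there, so no new geometric input (such as Sasakian identities or Proposition \ref{prop1}) is required.
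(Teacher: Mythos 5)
Your proposal is correct and takes essentially the same route the paper intends: the paper gives no separate proof of this theorem, stating only that it follows ``similarly'' to Theorem \ref{theorem1}, and your argument is precisely that transposition, using (\ref{eq213}) together with the orthogonal splitting $TM = D \oplus D^{\perp}$ and nondegeneracy of the metric on the normal bundle. Your closing caveat about whether (\ref{eq213}) pairs $A_V$ with $h^{*}$ or $A^{*}_V$ with $h$ is well taken, since the paper's own proof of Theorem \ref{theorem1} glosses over exactly that matching.
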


\begin{definition}
A contact $CR$-submanifold $M$ of a Sasakian statistical manifold $\bar{M}$ is called ${D}$-umbilic contact $CR$- submanifold with respect to $\bar{\nabla}$ (respectively $\bar{\nabla}^{*}$) if  $h(X,Y)=g(X,Y)L$ (respectively $h^{*}(X,Y)= g(X,Y)L$), for any $X,Y$ $\in$ $\Gamma({D})$, $L$ being some normal vector field.
\end{definition}
\begin{theorem}
Let $M$ be a contact CR-submanifold of a Sasakian statistical manifold $\bar{M}$. Then $M$ is $D$-umbilic w.r.t $\bar{\nabla}$ (resp. $\bar{\nabla}^*$), then  $M$ is $D$-totally geodesic w.r.t $\bar{\nabla}$ (resp. $\bar{\nabla}^*$)
\end{theorem}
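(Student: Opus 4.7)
The plan is to exploit the fact that $\xi \in \Gamma(D)$ together with the remark that $h(X,\xi) = F(X)$ for any $X$ tangent to $M$. This identity, combined with the behaviour of $F$ on the invariant distribution, will force the ambient normal vector field $L$ in the umbilicity condition to vanish.

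First I would unpack the $D$-umbilic hypothesis: assume $h(X,Y) = g(X,Y) L$ for every $X,Y \in \Gamma(D)$ and some $L \in \Gamma(T^{\perp}M)$. The strategy is to specialize this identity to the characteristic vector field. Since $\xi \in \Gamma(D)$, both $X = \xi$ and $Y = \xi$ are admissible, giving $h(\xi,\xi) = g(\xi,\xi)L = L$ by (\ref{eq25}).

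Next I would compute $h(\xi,\xi)$ independently using the remark following Proposition \ref{prop1}, which states $h(X,\xi) = F(X)$. Taking $X = \xi$ yields $h(\xi,\xi) = F(\xi)$. Because $\phi \xi = 0$ (a consequence of (\ref{eq24})--(\ref{eq26})), both the tangential and normal components of $\phi\xi$ vanish, so $F(\xi) = 0$. Combining with the previous step gives $L = 0$, and therefore $h(X,Y) = g(X,Y)L = 0$ for all $X,Y \in \Gamma(D)$, which is precisely the $D$-totally geodesic condition. The argument for $\bar{\nabla}^{*}$ is identical after replacing $h$ by $h^{*}$, using the corresponding dual version of the remark (which holds since $\bar{\nabla}^{*}$ also defines a Sasakian statistical structure by Definition \ref{def2.6}).

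There is essentially no obstacle here: the whole proof rests on the single observation that $\xi$ is a null direction for the second fundamental form restricted to $D$ (via $h(X,\xi) = F(X)$ and $F\xi = 0$), so the only scalar consistent with umbilicity on $D$ is zero. The only thing to be careful about is ensuring the identity $h(X,\xi) = F(X)$ is available for the dual connection as well; this follows by repeating the derivation of that remark using equation (\ref{eq210}) and the Gauss formula (\ref{eq211}) for $\bar{\nabla}^{*}$.
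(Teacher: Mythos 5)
Your proposal is correct and follows essentially the same route as the paper: specialize the umbilicity relation to $X=Y=\xi$ (legitimate since $\xi\in\Gamma(D)$), use $h(\xi,\xi)=F(\xi)=0$ to force $L=0$, and conclude $h\equiv 0$ on $D$, with the dual case handled identically via $h^{*}(X,\xi)=F(X)$. In fact you supply the justification of $h(\xi,\xi)=0$ (via the remark $h(X,\xi)=F(X)$ and $\phi\xi=0$) that the paper uses without comment.
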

\begin{proof}
Let $M$ be  ${D}$-umbilic contact CR- submanifold with respect to $\bar{\nabla}$, then 
\[
h(X,Y)=g(X,Y)L \;\;\;\;\forall\;\; X,Y \in \Gamma (D)
\]
 ;$L$ being some normal vector field on $M$.\\
Now on putting $X =Y =\xi $ and using $h(\xi,\xi) = 0$, we have $L= 0$. \\
This  implies that,  $M$ is $D$-totally geodesic w.r.t $\bar{\nabla}$. The corresponding result for dual connection also holds.

\end{proof}

\begin{definition}
	A contact $CR$-submanifold $M$ of a Sasakian statistical manifold $\bar{M}$ is called mixed totally geodesic contact $CR$- submanifold with respect to $\bar{\nabla}$ (respectively $\bar{\nabla}^{*}$) if its second fundamental form  satisfies $h(X,Y)=0$ (respectively $h^{*}(X,Y)=0$), for any $X$ $\in$ $\Gamma({D})$ and $Y$ $\in$ $\Gamma({D^{\perp}})$.
\end{definition}
\begin{theorem}
Let $M$ be a contact CR-submanifold of a Sasakian statistical manifold $\bar{M}$. Then $M$ mixed totally geodesic w.r.t $\bar{\nabla}$ (resp. $\bar{\nabla}^*$) if and only if 
\[
(i)\;  A_VX \in D \;\;\;\;(resp.\;\; A^*_VX \in D ) \;\;\;\; \forall\; X \in D,
\]
\[
(ii) \; A_VX \in D^{\perp} \;\;\;\;(resp.\;\; A^*_VX \in D^{\perp}) \;\;\;\; \forall\; X \in D^{\perp}.
\]
and normal vector field $V$.
\end{theorem}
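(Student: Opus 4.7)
The plan is to mirror the two-line argument used for Theorem \ref{theorem1} and its $D^{\perp}$-analogue, since mixed total geodesicity is just the ``$D$-vs-$D^{\perp}$'' version of those statements. First I would assume that $M$ is mixed totally geodesic with respect to $\bar\nabla$, so that $h(X,Y)=0$ whenever $X\in\Gamma(D)$ and $Y\in\Gamma(D^{\perp})$. Relation (\ref{eq213}) then gives $g(A_V X,Y)=0$ for every such pair and every normal field $V$. Since $TM=D\oplus D^{\perp}$ is an orthogonal splitting, this vanishing forces $A_V X\in\Gamma(D)$ for every $X\in\Gamma(D)$ (that is, (i)); applying the same identity with the roles of $X$ and $Y$ exchanged yields $A_V X\in\Gamma(D^{\perp})$ for every $X\in\Gamma(D^{\perp})$, which is (ii).

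For the converse I would read the chain backwards. Given (i) and (ii), for any $X\in\Gamma(D)$, $Y\in\Gamma(D^{\perp})$ and any normal $V$, condition (i) places $A_V X$ in $\Gamma(D)$, hence orthogonal to $Y$, so (\ref{eq213}) yields
\[
g(h(X,Y),V)=g(A_V X,Y)=0.
\]
Since $V$ is arbitrary and $g$ is non-degenerate on $T^{\perp}M$, we conclude $h(X,Y)=0$; the same vanishing with the roles of $X,Y$ swapped follows from (ii) together with the symmetry of $h$. Thus $M$ is mixed totally geodesic with respect to $\bar\nabla$. The parenthetical ``resp.'' statement for $\bar\nabla^*$ is obtained verbatim by replacing $h,A_V,\bar\nabla$ by $h^*,A_V^*,\bar\nabla^*$ and invoking the companion half of (\ref{eq213}).

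I do not anticipate any real obstacle: the entire argument is a direct consequence of the Weingarten-type identity (\ref{eq213}) and the orthogonal decomposition $TM=D\oplus D^{\perp}$, with no further Sasakian-statistical identity needed beyond what has already been deployed in the preceding geodesicity theorems. The one subtlety worth flagging is that both (i) and (ii) must be imposed: mixed total geodesicity is a condition on pairs $(X,Y)\in\Gamma(D)\times\Gamma(D^{\perp})$, and pairing $A_V X$ against normals only determines it up to its component in the opposite distribution, so the two conclusions are genuinely independent rather than one implying the other.
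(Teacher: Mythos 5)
Your argument is essentially the paper's own proof, written out in more detail: the paper also just invokes (\ref{eq213}) together with the orthogonal splitting $TM=D\oplus D^{\perp}$, in both directions, so the approach matches. Two caveats are worth recording. First, as printed, (\ref{eq213}) says $g(A_VX,Y)=g(h^{*}(X,Y),V)$ and $g(A^{*}_VX,Y)=g(h(X,Y),V)$, so the vanishing of $h$ on mixed pairs is literally equivalent to conditions on $A^{*}_V$, not $A_V$; your identity $g(h(X,Y),V)=g(A_VX,Y)$ reproduces the same crossed pairing that the theorem statement and the paper's proof already use, so this is a bookkeeping slip inherited from the paper rather than a gap peculiar to your argument, but the dual operators should really be swapped for consistency with (\ref{eq213}). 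Second, your closing ``subtlety'' is not correct: since $h$ (resp.\ $h^{*}$) is symmetric, the shape operator is self-adjoint, so $g(A_VX,Y)=g(A_VY,X)$ for $X\in\Gamma(D)$, $Y\in\Gamma(D^{\perp})$, and therefore conditions (i) and (ii) (each quantified over all normal $V$) are equivalent to one another rather than independent; the theorem lists both, but either one alone already characterizes mixed total geodesicity.
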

\begin{proof}
Let $M$ be mixed geodesic  w.r.t $\bar{\nabla}$, then by equation (\ref{eq213}), we have
\[
h(X,Y) = 0
\]
iff $A_VX \in D$, $\forall \;X \in D$ and the normal vector field $V$.\\
Also, part (ii) holds for $X \in D^{\perp}$ and the normal vector $V$ from (\ref{eq213}).
\end{proof}
\begin{theorem}
Let $M$ be mixed totally geodesic  contact CR-submanifold w.r.t $\bar{\nabla}$ (resp. $\bar{\nabla}^*$) of a Sasakian statistical manifold $\bar{M}$. Then,
\[
(i) \;A_{\phi V}X = \phi A^*_V X, \;\;\;\; (resp. \;\; A^*_{\phi V}X = \phi A_V X )
\]  
\[
(ii) \;\nabla^{\perp}_X \phi V = \phi \nabla^{*\perp}_X V, \;\;\;\; (resp.\;\; \nabla^{*\perp}_X \phi V = \phi \nabla^{\perp}_X V )
\]
$\forall X \in D$ and the normal vector field $V$.
\end{theorem}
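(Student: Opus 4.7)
The plan is to apply the Sasakian statistical identity (\ref{eq29}) to $X \in \Gamma(D)$ with $Y$ replaced by a normal vector field $V$. Since $V$ is normal, $\bar g(V,\xi) = 0$ and $\bar g(V,X) = 0$, so the right-hand side of (\ref{eq29}) vanishes, giving the master identity
\[
\bar{\nabla}_X \phi V \;=\; \phi\,\bar{\nabla}^*_X V.
\]
In order for the formulas $A_{\phi V}X$ and $\nabla^\perp_X \phi V$ even to make sense, one must take $V$ in the invariant subbundle $\nu \subset T^\perp M$, so that $\phi V = CV$ is again normal and $BV = 0$.

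Next I would expand both sides of the master identity via the Weingarten formula (\ref{eq212}) and the splittings $\phi U = TU + FU$ on $TM$ and $\phi W = BW + CW$ on $T^\perp M$. The left-hand side becomes $-A_{\phi V}X + \nabla^\perp_X \phi V$, while the right-hand side becomes $-TA^*_V X - FA^*_V X + B\nabla^{*\perp}_X V + C\nabla^{*\perp}_X V$. To turn the resulting equation into the claimed identities, one must kill the cross-terms $FA^*_V X$ and $B\nabla^{*\perp}_X V$, and this is where the mixed totally geodesic hypothesis enters. By (\ref{eq213}) and $h(X,Y) = 0$ for $X \in \Gamma(D), Y \in \Gamma(D^\perp)$, together with $h(X,\xi) = FX = 0$ on $D$ from the Remark after Proposition \ref{prop1}, we obtain $\bar g(A^*_V X, Y) = 0$ for every $Y$ orthogonal to $D$. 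Hence $A^*_V X \in \Gamma(D)$ for $X \in \Gamma(D)$, so $FA^*_V X = 0$ and $TA^*_V X = \phi A^*_V X$ is purely tangential.

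For the second cross-term I would pair $\nabla^{*\perp}_X V$ against $\phi Z$ for $Z \in \Gamma(D^\perp)$: since $V \in \nu$ is orthogonal to $\phi D^\perp$, the duality (\ref{eq22}) gives $\bar g(\nabla^{*\perp}_X V, \phi Z) = -\bar g(V, \bar{\nabla}_X \phi Z)$, and rewriting $\bar{\nabla}_X \phi Z = \phi \bar{\nabla}^*_X Z$ via (\ref{eq29}) reduces the question to orthogonality of $V$ against $\phi$ applied to $\nabla^*_X Z + h^*(X,Z)$; under the hypothesis this pairing drops out, yielding $\nabla^{*\perp}_X V \in \Gamma(\nu)$ and hence $B\nabla^{*\perp}_X V = 0$. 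With both cross-terms annihilated, the tangential and normal projections of the expanded master identity become precisely (i) $A_{\phi V}X = \phi A^*_V X$ and (ii) $\nabla^\perp_X \phi V = \phi\nabla^{*\perp}_X V$. The parenthetical (resp.) statements are obtained verbatim by starting instead from the dual identity $\bar{\nabla}^*_X \phi V = \phi\bar{\nabla}_X V$ (the dual of (\ref{eq29}) listed in the Remark following the statistical Sasakian theorem) and running the same argument with $\bar{\nabla}$ and $\bar{\nabla}^*$ interchanged and $h^*(X,Y) = 0$ playing the role of $h(X,Y) = 0$.

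The main technical obstacle is the third paragraph: the pairing giving $\nabla^{*\perp}_X V \in \Gamma(\nu)$ leaves a residual involving the $\nu$-component of $\phi h^*(X,Z)$, and mixed totally geodesicity with respect to $\bar{\nabla}$ controls only $h$, not $h^*$. Closing this gap cleanly requires combining the hypothesis with the $K$--$\phi$ compatibility (\ref{eq29a}) of the Sasakian statistical structure and the normal-component identity $\nabla^\perp_X FY - F\nabla^*_X Y = Ch^*(X,Y) - h(X,TY)$ from Proposition \ref{prop1}, which together transfer the vanishing of $h$ into the vanishing of the obstructing term.
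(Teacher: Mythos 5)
Your route coincides with the paper's own: specialize \eqref{eq29} to a normal field $V$ (the right-hand side vanishes because $\xi$ is tangent to $M$), expand both sides through the Weingarten formulae \eqref{eq212}, and compare tangential and normal components. Your added precision is welcome and goes beyond what the paper writes: the restriction to $V\in\Gamma(\nu)$ so that $\phi V$ is again normal, and the verification that $A^*_VX\in\Gamma(D)$ for $X\in\Gamma(D)$ via $g(A^*_VX,Z)=g(h(X,Z),V)=0$ and $g(A^*_VX,\xi)=g(h(X,\xi),V)=g(FX,V)=0$, are exactly the points at which the mixed totally geodesic hypothesis enters, and up to there your argument is sound.

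The gap is the one you flag yourself, and your proposed repair does not close it. The cross-term $B\nabla^{*\perp}_XV$ is genuinely obstructed: by duality \eqref{eq22} together with \eqref{eq29}, one finds $g(\nabla^{*\perp}_XV,\phi Z)=g(h^*(X,Z),\phi V)=g(A_{\phi V}X,Z)$ for $Z\in\Gamma(D^\perp)$ and $V\in\Gamma(\nu)$, so its vanishing is equivalent to the vanishing of the mixed components of $h^*$ in the $\nu$-directions, i.e.\ to information carried by $\bar{\nabla}^*$, not by the stated hypothesis on $h$. The tools you invoke do not transfer this: the normal-part identity of Proposition \ref{prop1} with $Y=Z\in\Gamma(D^\perp)$ merely rewrites $g(h^*(X,Z),\phi V)$ as $-g(\nabla^{\perp}_X\phi Z,V)$, and the compatibility \eqref{eq29a} only yields the symmetry $g(K(X,Z),\phi W)=g(K(X,W),\phi Z)$, never a vanishing; indeed, when $h(X,Z)=0$ one has $h^*(X,Z)=2\hat{h}(X,Z)$ (with $\hat{h}$ the Levi-Civita second fundamental form), which the hypothesis does not control. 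So your closing paragraph is an assertion rather than a proof. You should know that the paper's proof commits the same omission: it ``equates tangential and normal parts'' of $-A_{\phi V}X+\nabla^{\perp}_X\phi V=\phi(-A^*_VX+\nabla^{*\perp}_XV)$ as if $\phi A^*_VX$ were purely tangential and $\phi\nabla^{*\perp}_XV$ purely normal, never disposing of $B\nabla^{*\perp}_XV$. A clean version of the statement requires either mixed total geodesicity with respect to both $\bar{\nabla}$ and $\bar{\nabla}^*$, or weakening (i)--(ii) to their $D$- and $\nu$-components.
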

\begin{proof} For any $X \in D$ and normal vector field $V$, equation (\ref{eq212}) implies
\[
\bar{\nabla}_X \phi V = -A_{\phi V}X + \nabla^{\perp}_X \phi V
\]
Now, by using (\ref{eq29}) and (\ref{eq212}) , we obtain
\[
\bar{\nabla}_X \phi V = \phi(-A^*_VX +\nabla^{\perp}_X V) + g(Y,\xi)X - g(Y,X)\xi
\]
For $X \in D$ and $Y \in D^{\perp}$, the above equations yield that
\[
-A_{\phi V}X + \nabla^{\perp}_X \phi V = \phi(-A^*_VX +\nabla^{\perp}_X V)
\]
By equating tangential and normal parts, we get the desired result.
\end{proof}
\begin{definition}
A contact $CR$-submanifold $M$ of a Sasakian statistical manifold $\bar{M}$ is said to be foliate  contact $CR$- submanifold of $\bar{M}$ if $D$ is involutive.
\end{definition}
\begin{remark}
If $M$ is foliate  contact $CR$-  submanifold, then from equation (\ref{eq24}) we have
\[
h(\phi X, \phi Y) = h(\phi^2X,Y)= -h(X,Y)
\]
\end{remark}
\begin{theorem}
If $M$ is foliate mixed totally geodesic contact CR-submanifold  w.r.t $\bar{\nabla}$ (resp. $\bar{\nabla}^*$) of a Sasakian statistical manifold  $\bar{M}$, then we have 
\[
A^*_V \phi X + \phi A^*_V X =0, \;\;\;\; (resp.\;\; A_V \phi X + \phi A_V X =0 )
\]
$\forall ~~ X \in D $ and normal vector field $V$.
 \end{theorem}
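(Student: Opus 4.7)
The plan is to establish the identity by testing against arbitrary tangent vectors, using the mixed totally geodesic assumption to show that both terms live in $D$, and then invoking the foliate hypothesis through the computation from Theorem \ref{th35}.

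First I would observe that under the mixed totally geodesic hypothesis with respect to $\bar{\nabla}$, the relation $g(A^{*}_{V}X, Y) = g(h(X, Y), V)$ from (\ref{eq213}) forces $A^{*}_{V}X \in \Gamma(D)$ for every $X \in \Gamma(D)$: indeed, for $Y \in \Gamma(D^{\perp})$ the inner product vanishes because $h(X, Y)=0$ by the mixed totally geodesic assumption. Applied to $\phi X \in \Gamma(D)$, this also gives $A^{*}_{V}\phi X \in \Gamma(D)$. Since $D$ is $\phi$-invariant, we then have $\phi A^{*}_{V}X \in \Gamma(D) \subset \Gamma(TM)$, so the proposed identity makes sense in $\Gamma(D)$.

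To establish the identity, it is enough (by nondegeneracy of $g|_{D}$) to verify that $g(A^{*}_{V}\phi X + \phi A^{*}_{V}X, Y) = 0$ for every $Y \in \Gamma(D)$. Using (\ref{eq213}) for the first summand and the skew-adjointness of $\phi$ on tangent vectors (noting that $\phi Y = TY \in \Gamma(D)$, so no normal component arises) for the second, the left-hand side rewrites as
\[
g(h(\phi X, Y), V) - g(h(X, \phi Y), V).
\]
The foliate assumption then finishes the job: the identity $F[X, Y] = h(X, \phi Y) - h(Y, \phi X)$ derived in the proof of Theorem \ref{th35} collapses, since $D$ integrable means $F[X, Y] = 0$, to $h(X, \phi Y) = h(Y, \phi X)$ for $X, Y \in \Gamma(D)$. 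Combined with the symmetry of $h$, this yields $h(\phi X, Y) = h(X, \phi Y)$ and the two terms cancel. The parenthetical statement with $\bar{\nabla}$ and $\bar{\nabla}^{*}$ interchanged follows by a verbatim argument after invoking the dual Sasakian statistical structure.

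The step I expect to be the main conceptual obstacle is the preliminary verification that $\phi A^{*}_{V}X$ is actually tangent to $M$; a priori the decomposition (\ref{eq214}) allows a normal component $F(A^{*}_{V}X)$, and without controlling it the identity as stated would not even be well posed. The mixed totally geodesic hypothesis is exactly what confines $A^{*}_{V}X$ to the $\phi$-invariant distribution $D$ and thereby resolves this issue; once that is in place, the foliate hypothesis supplies the symmetry of $h$ on $D$ that makes the two terms collapse.
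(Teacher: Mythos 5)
Your proposal is correct and follows essentially the same route as the paper: the paper's proof is exactly the chain $g(A^*_V\phi X,Y)=g(h(\phi X,Y),V)=g(h(X,\phi Y),V)=g(A^*_VX,\phi Y)=-g(\phi A^*_VX,Y)$, using (\ref{eq213}), the foliate symmetry $h(X,\phi Y)=h(Y,\phi X)$ from Theorem \ref{th35}, and (\ref{eq26}). Your additional preliminary step, using the mixed totally geodesic hypothesis to place $A^*_VX$ and $A^*_V\phi X$ in $\Gamma(D)$ so that $\phi A^*_VX$ is tangent and testing against $Y\in\Gamma(D)$ suffices, is a detail the paper leaves implicit, and it is a worthwhile clarification rather than a departure in method.
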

\begin{proof} For $X \in D $ and normal vector field $V$, we have
\begin{eqnarray}
g(A^*_V \phi X,Y) &=& g(h(\phi X,Y), V) = g(h(X,\phi Y), V) \nonumber \\
&=& g(A^*_VX,\phi Y) = -g(\phi A^*_VX,Y) \nonumber
\end{eqnarray}
 using the  given hypothesis and equations (\ref{eq213}) and (\ref{eq26}).
Thus we get the  result w.r.t $\bar{\nabla}$.
\end{proof}
Following \cite{Dirik2020} and \cite{kazankazan2018}, an example of a proper contact $CR$-submanifold of Sasakian statistical manifold is presented as below:
\begin{example}
Let $M$ be a submanifold of $\bar{M}=(R^7,\bar{g})$ defined by the following equation 
\[
\gamma(u,v,w,s,z) = (u,v,w+s,0,0,s-w,z)
\]
The tangent bundle of $M$ is spanned by the tangent vectors 
\[
e_1 =\frac{\partial}{\partial x_1},\;\;\; e_2 =\frac{\partial}{\partial y_1} ,\;\;\; e_3 =\frac{\partial}{\partial x_2} - \frac{\partial}{\partial y_3}, \;\;\; e_4 =\frac{\partial}{\partial x_2} + \frac{\partial}{\partial y_3}, \;\; e_5 =\frac{\partial}{\partial z} 
\]
which are linear independent at each point of $M$.
Let $\bar{g}$ be the Riemannian metric defined by $\bar{g}(e_i,e_j) = 0 $, $i \neq j$, $i = j= 1,2,3,4,5$ and $\bar{g}(e_k,e_k) = 1, k = 1,2,3,4,5.$
Let $\eta$ be the 1-form defined by $ \eta(X) = \bar{g}(X,e_5)$ for any $X \in \Gamma(T\bar{M})$.\\

Let $\phi$ be a $(1,1)$ tensor field  in $R^7$ with coordinate system 
$(x_1,y_1, x_2,y_2,\\ x_3, y_3,z)$, therefore choosing 
\[ \phi(\frac{\partial}{\partial x_i}) = \frac{\partial}{\partial y_i}, \;\;\;  \phi(\frac{\partial}{\partial y_j}) = - \frac{\partial}{\partial x_j},\;\;\;\; (i,j= 1,2,3)
\] we have 
\[
\phi e_1 = e_2, \;\; \phi e_2 = - e_1,\;\; \phi e_3 = \frac{\partial}{\partial y_2} + \frac{\partial}{\partial x_3}, \;\; \phi e_4 = \frac{\partial}{\partial y_2} -\frac{\partial}{\partial x_3}, \;\; \phi e_5 = 0.
\] 

 The linearity of $\phi$ and $\bar{g}$ implies that $\eta(e_5) = 1$, $\phi^2 X = -X + \eta(X)e_5$ and $\bar{g}(\phi X, \phi Y) = \bar{g}(X,Y) -\eta(X)\eta(Y) $ for any $X,Y \in \Gamma(TM)$. Therefore for $e_5 = \xi,$ $(\phi,\xi, \eta,\bar{g})$ defines an almost contact metric structure on $M$.
We have 
\[
[e_1,e_2] = [e_1,e_3] = [e_1,e_4] = [e_1,e_5] =0,\;\; [e_2,e_3] =[e_2,e_4] = [e_2,e_5] =0, 
\]
\[
 [e_3,e_4] = [e_3,e_5] =0, \;\; [e_4,e_5] =0
\]
The Levi-Civita connection $\hat{{\bar \nabla}}$ of the metric tensor $\bar{g}$ is given by the Koszul's formula as 
\[ 
\hat{{\bar \nabla}}_{e_i} {e_j} = 0 \;\; \forall \;i,j = 1,2,3,4,5.
\]
It concludes that $(\phi,\xi, \eta,\bar{g})$ is a Sasakian structure on $\bar{M}$ using (\ref{eq28}). So, $(\bar{M},\phi,\xi, \eta,\bar{g})$ is a  7-dimensional Sasakian manifold.\\

To add statistical structure to $\bar{M}$, suppose a connection $\bar{\nabla}$ is defined as $\bar{\nabla}_XY = \hat{{\bar \nabla}}_XY + K(X,Y) $ where $K(X,Y) = \eta(X)\eta(Y)\xi$. Then
 the torsion tensor of the connection $\bar{\nabla}$, i.e. $T(e_i,e_j) = 0$ and $ (\bar{\nabla}_{e_i}\bar{g})(e_j,e_k) = 0$  for all $i,j,k$. Therefore, $(\bar{\nabla}, \bar{g})$ is a statistical structure. Since $K(e_i,\phi e_j) = \phi K(e_i,e_j) = 0$ for $i,j$, ($\bar \nabla,\bar{g},\phi,\eta,\xi$) is a Sasakian statistical structure on $\bar{M}$. \\

Now $D$ = span$\{e_1,e_2,e_5\}$ is a invariant distribution. Since $\bar{g}(\phi e_3,e_i) = 0$; $i= 1, 2,4,5$ and $\bar{g}(\phi e_4,e_j) = 0$; $j= 1, 2,3,5$, therefore  $\phi e_3$, $\phi e_4 $ are orthogonal to $M$ and $D^{\perp}$ = span$\{e_3,e_4\}$ is an anti-invariant distribution. Thus $M$ is a 5-dimensional proper contact $CR$-submanifold of Sasakian statistical manifold $\bar{M}$.  
\end{example}
\section{Contact $CR$-Product of Sasakian Statistical Manifold}
In this section, the statistical version of $CR$-product in Sasakian statistical manifold has been investigated. A necessary and sufficient condition for a contact  $CR$-submanifold to be a contact $CR$-product has been given. \\

\begin{definition}
A contact $CR$-submanifold $M$  of Sasakian statistical manifold is said to be a contact $CR$-product if it is the locally product of $M^{\top}$ and $ M^{\perp}$, where $M^{\top}$ denotes the leaf of the distribution $D$.
\end{definition}
\begin{remark}
In a contact $CR$-submanifold of Sasakian statistical manifold, we assume that:\\
(i) the leaf of $ M^{\perp}$ of $D^{\perp}$ is totally geodesic w.r.t $\bar{\nabla}$ (resp.$\bar{\nabla}^*$)  in $M$, that is, $\nabla_ZW$ (resp.$\nabla^*_ZW$) is in  $D^{\perp}$  for any $Z,W$ in   $D^{\perp}$.\\
(ii) the leaf of $M^{\top}$ of $D$ is totally geodesic w.r.t $\bar{\nabla}$ (resp.$\bar{\nabla}^*$)  in $M$, that is, $\nabla_ZW$(resp. $\nabla^*_ZW$) is in $D$ for any $Z,W$ in   $D$ . 
\end{remark}
\begin{lemma}\label{lemma43}
Let $M$ be a contact CR-submanifold of Sasakian statistical manifold $\bar{M}$.Then
\[
g(h^*(X,U),\phi Z) = \eta(X)g(\phi Z, \phi U)
\]
if and only if the leaf $M^{\perp}$ of $D^{\perp}$ is totally geodesic in $M$
for any $Z, W \in \Gamma(D^{\perp})$ and $X$ in $D$.
\end{lemma}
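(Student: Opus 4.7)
The plan is to derive a single master identity relating $h^{*}$ and $\nabla^{*}$ on $D^{\perp}$ from the Sasakian statistical structure equation (\ref{eq29}), and then observe that the stated equivalence is just a reading of that identity together with the fact that $\phi$ is an automorphism of $D\cap\xi^{\perp}$.

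First I would pick $U,Z\in\Gamma(D^{\perp})$ and apply (\ref{eq29}) with $X\mapsto U$, $Y\mapsto Z$. Since $\xi\in D$ forces $g(Z,\xi)=0$, this collapses to
\[ \bar\nabla_{U}\phi Z \;=\; \phi\,\bar\nabla^{*}_{U}Z - g(Z,U)\xi. \]
Because $\phi Z=FZ\in T^{\perp}M$ (as $Z\in D^{\perp}$ means $TZ=0$), the left side is expanded by Weingarten (\ref{eq212}); the right side is expanded by Gauss (\ref{eq211}) for $\bar\nabla^{*}$ together with the decompositions (\ref{eq214})--(\ref{eq215}). Taking tangential parts yields
\[ -A_{\phi Z}U \;=\; T\nabla^{*}_{U}Z + Bh^{*}(U,Z) - g(Z,U)\xi. \]

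Now I would pair this with an arbitrary $X\in\Gamma(D)$: use (\ref{eq213}) to rewrite $g(A_{\phi Z}U,X)=g(h^{*}(X,U),\phi Z)$; use the skew-symmetry (\ref{eq26}) of $\phi$ to convert $g(T\nabla^{*}_{U}Z,X)$ into $-g(\nabla^{*}_{U}Z,\phi X)$; and crucially note that for $X\in D$ the vector $\phi X\in D\subset TM$, so that $g(Bh^{*}(U,Z),X)=-g(h^{*}(U,Z),\phi X)=0$ because $h^{*}$ is normal-valued. This delivers the master identity
\[ g\!\left(h^{*}(X,U),\phi Z\right) \;=\; g\!\left(\nabla^{*}_{U}Z,\phi X\right) + \eta(X)\,g(Z,U). \]

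Finally, (\ref{eq25}) together with $\eta(Z)=\eta(U)=0$ gives $g(\phi Z,\phi U)=g(Z,U)$, so the condition stated in the lemma is exactly $g(h^{*}(X,U),\phi Z)=\eta(X)g(Z,U)$. Substituting into the master identity shows that it is equivalent to $g(\nabla^{*}_{U}Z,\phi X)=0$ for all $X\in\Gamma(D)$ and $U,Z\in\Gamma(D^{\perp})$. A short separate argument, differentiating $g(Z,\xi)=0$ via the duality (\ref{eq22}) and using the relation $\nabla_{U}\xi = g(\nabla_{U}\xi,\xi)\xi$ (valid because $TU=0$ on $D^{\perp}$, per the remark following Proposition~\ref{prop1}), shows $g(\nabla^{*}_{U}Z,\xi)=0$. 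Since $\phi$ is an automorphism of $D\cap\xi^{\perp}$, these two facts together amount to $\nabla^{*}_{U}Z\in\Gamma(D^{\perp})$, i.e.\ the leaf $M^{\perp}$ is totally geodesic in $M$. The main delicacy is that (\ref{eq29}) mixes the two dual connections, so one must thread the decompositions (\ref{eq211})--(\ref{eq215}) carefully and keep the $\xi$-direction separate from $D\cap\xi^{\perp}$ before the equivalence becomes manifest.
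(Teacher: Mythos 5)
Your proof is correct and follows essentially the same route as the paper: your ``master identity'' is precisely the paper's equation (\ref{eq31}) combined with (\ref{eq213}), from which the equivalence is read off exactly as in the paper's proof of Lemma \ref{lemma43}. In fact your write-up is the more complete of the two, since the paper argues explicitly only in one direction and never addresses the $\xi$-component of $\nabla^{*}_{U}Z$, which you settle correctly via duality (\ref{eq22}) and the remark following Proposition \ref{prop1}.
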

\begin{proof} For any $X$ in $D$ and $Z, W \in \Gamma(D^{\perp})$ 
\[
g(A_{\phi Z}U,X) = g(\nabla^*_UZ,\phi X)+\eta(X)g(Z,U)
\]
using equation (\ref{eq31}).
Now $Z, U \in \Gamma(D^{\perp})$ implies that $\nabla^*_UZ$ $\in \Gamma(D^{\perp})$. So from above equation, we conclude that
\begin{equation}\label{eq41b}
g(A_{\phi Z}U,X) = \eta(X)g(Z,U)
\end{equation}
Since $M$ is a contact $CR$ submanifold of $\bar{M}$, the desired result follows from the self adjoint property of $A$ and (\ref{eq213}).
\end{proof}
\begin{lemma}
Let $M$ be a contact CR-submanifold of Sasakian statistical manifold $\bar{M}$.Then, we have
\[
\nabla^{\perp}_Z\phi W- \nabla^{\perp}_W\phi Z \in \phi D^{\perp}
\]
for any $Z, W \in \Gamma(D^{\perp})$ and $\lambda$ in $\nu $.
\end{lemma}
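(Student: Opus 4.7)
The plan is to prove that the normal vector $\nabla^{\perp}_Z\phi W - \nabla^{\perp}_W\phi Z$ has vanishing component along the $\phi$-invariant complementary subbundle $\nu$ in the decomposition $T^{\perp}M = \phi D^{\perp}\oplus\nu$ recorded in Section~3. Equivalently, I would show that $g(\nabla^{\perp}_Z\phi W - \nabla^{\perp}_W\phi Z,\lambda)=0$ for every $\lambda\in\Gamma(\nu)$, which forces the difference to lie in $\phi D^{\perp}$.

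First I would expand $g(\nabla^{\perp}_Z\phi W,\lambda)$ using the Weingarten formula (\ref{eq212}): since the shape operator term $A_{\phi W}Z$ is tangent to $M$ and $\lambda$ is normal, the pairing reduces to $g(\bar{\nabla}_Z\phi W,\lambda)$. Next I apply the Sasakian statistical identity (\ref{eq29}); the correction terms $\bar g(W,\xi)Z-\bar g(W,Z)\xi$ drop out in the pairing with $\lambda$, because $\bar g(W,\xi)=0$ (as $W\in D^{\perp}$, $\xi\in D$) and $\bar g(\xi,\lambda)=0$ ($\xi$ tangent, $\lambda$ normal). This leaves
\begin{equation*}
g(\nabla^{\perp}_Z\phi W,\lambda) \;=\; g(\phi\bar{\nabla}^{*}_Z W,\lambda) \;=\; -g(\bar{\nabla}^{*}_Z W,\phi\lambda),
\end{equation*}
using the skew-symmetry $g(\phi U,V)=-g(U,\phi V)$ from (\ref{eq26}). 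Because $\nu$ is $\phi$-invariant, $\phi\lambda$ is again a normal vector field, so decomposing $\bar{\nabla}^{*}_Z W = \nabla^{*}_Z W + h^{*}(Z,W)$ via (\ref{eq211}) annihilates the tangential summand and yields $g(\nabla^{\perp}_Z\phi W,\lambda) = -g(h^{*}(Z,W),\phi\lambda)$.

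Finally I would carry out the analogous computation with $Z$ and $W$ interchanged and subtract; the symmetry of $h^{*}$ makes the right-hand sides cancel, giving
\begin{equation*}
g\bigl(\nabla^{\perp}_Z\phi W - \nabla^{\perp}_W\phi Z,\,\lambda\bigr) \;=\; -g\bigl(h^{*}(Z,W) - h^{*}(W,Z),\,\phi\lambda\bigr) \;=\; 0.
\end{equation*}
Since $\lambda\in\Gamma(\nu)$ is arbitrary, the difference lies in the orthogonal complement of $\nu$ in $T^{\perp}M$, namely $\phi D^{\perp}$. The only point that requires care is tracking the tangent versus normal decomposition after each application of $\phi$; specifically, the $\phi$-invariance of $\nu$ is what allows the $\phi\lambda$ appearing after the skew-symmetry step to still annihilate the tangential $\nabla^{*}_Z W$. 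Once that observation is in place, the argument reduces to the two displayed identities above.
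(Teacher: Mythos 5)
Your proof is correct and follows essentially the same route as the paper: both expand $\bar{\nabla}_Z\phi W$ via the Weingarten and Gauss formulae together with the Sasakian statistical identity (\ref{eq29}), interchange $Z$ and $W$, subtract, and pair with $\lambda\in\Gamma(\nu)$, using the symmetry of $h^{*}$ and the orthogonality of tangent vectors to $\phi\lambda$ (equivalently, of $\phi$ of a tangent vector to $\nu$). The only cosmetic difference is that you take the inner product with $\lambda$ term by term from the start, while the paper subtracts the two vector equations first and then pairs with $\lambda$.
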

\begin{proof} Using equations (\ref{eq29}),(\ref{eq211}) and (\ref{eq212}), we get
\[
-A_{\phi W}Z + \nabla^{\perp}_Z\phi W = \phi \nabla^*_ZW + \phi h^*(Z,W)+g(W,\xi)Z -g(W,Z)\xi
\]
for $Z, W \in \Gamma(D^{\perp})$ and $\lambda$ in $\nu $.\\

Interchanging W by Z, we get
\[
-A_{\phi Z}W + \nabla^{\perp}_WJZ = \phi \nabla^*_WZ + \phi h^*(W,Z)+g(Z,\xi)W -g(Z,W)\xi
\]
On subtracting and taking inner product with $\lambda$, we get the desired result.
\end{proof}
\begin{lemma}
Let $M$ be a contact CR-submanifold of Sasakian statistical manifold $\bar{M}$.Then, we have
\[
A^*_{\lambda} \phi Y = - A_{\phi \lambda}Y
\]
for any $X,Y\in \Gamma(D)$ and $\lambda$ in $\nu $.
\end{lemma}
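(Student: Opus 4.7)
The plan is to prove this identity by starting from the defining relation (\ref{eq29}) of the Sasakian statistical structure, applied with $Y \in \Gamma(D)$, and then extracting the tangential/normal bookkeeping via the Gauss--Weingarten formulae (\ref{eq211})--(\ref{eq212}) and the relations (\ref{eq213}) between $h, h^{*}$ and $A, A^{*}$. The key point to exploit is that $\nu$ is $\phi$-invariant, so $\phi\lambda \in \nu \subset \Gamma(T^{\perp}M)$, which means inner products of tangent covariant derivatives against $\phi\lambda$ pick out only second fundamental form contributions.

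Concretely, first I would take an arbitrary $X \in \Gamma(TM)$ and write, from (\ref{eq29}) with $Y\in\Gamma(D)$:
\[
\bar{\nabla}_X \phi Y - \phi\bar{\nabla}^{*}_X Y = \bar{g}(Y,\xi)X - \bar{g}(Y,X)\xi.
\]
Taking the $\bar g$-inner product with $\lambda \in \Gamma(\nu)$ kills the right-hand side, since $\lambda$ is normal while the right-hand side is tangent. Since $Y \in D$ implies $\phi Y \in D \subset TM$, the Gauss formulae give
\[
\bar{g}(\bar{\nabla}_X \phi Y,\lambda) = \bar{g}(h(X,\phi Y),\lambda),
\qquad
\bar{g}(\bar{\nabla}^{*}_X Y,\phi\lambda) = \bar{g}(h^{*}(X,Y),\phi\lambda).
\]
Using the skew-symmetry of $\phi$ with respect to $\bar g$ (\ref{eq26}), I rewrite
$\bar{g}(\phi\bar{\nabla}^{*}_X Y,\lambda) = -\bar{g}(\bar{\nabla}^{*}_X Y,\phi\lambda)$,
and because $\phi\lambda$ lies in the normal bundle, the tangent part $\nabla^{*}_X Y$ contributes nothing. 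This reduces the identity to
\[
\bar{g}(h(X,\phi Y),\lambda) + \bar{g}(h^{*}(X,Y),\phi\lambda) = 0.
\]

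Finally, I would convert via (\ref{eq213}): the first term equals $g(A^{*}_{\lambda}\phi Y,X)$ and the second equals $g(A_{\phi\lambda}Y,X)$. Since this vanishes for all $X\in\Gamma(TM)$, we obtain $A^{*}_{\lambda}\phi Y + A_{\phi\lambda}Y = 0$, which is the claim. The only subtlety — which I expect to be the main point to get right rather than a true obstacle — is tracking the swap between $\bar\nabla$ and $\bar\nabla^{*}$ enforced by (\ref{eq29}), making sure that $h$ appears on the $\phi Y$ side and $h^{*}$ on the $Y$ side so that the correspondence with $A^{*}$ and $A$ respectively through (\ref{eq213}) produces the stated sign. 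The hypothesis $\lambda\in\nu$ (as opposed to $\lambda\in\phi D^{\perp}$) is essential, since otherwise $\phi\lambda$ would have a tangential component and the step killing $\bar{g}(\nabla^{*}_XY,\phi\lambda)$ would fail.
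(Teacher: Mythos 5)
Your proof is correct and follows essentially the same route as the paper: apply the Sasakian statistical identity (\ref{eq29}), take the $\bar g$-inner product with $\lambda\in\Gamma(\nu)$, use the Gauss formulae together with the skew-symmetry (\ref{eq26}) and the $\phi$-invariance of $\nu$ to reduce to $\bar g(h(X,\phi Y),\lambda)+\bar g(h^*(X,Y),\phi\lambda)=0$, and convert via (\ref{eq213}). Testing against arbitrary $X\in\Gamma(TM)$ rather than only $X\in\Gamma(D)$ is a minor (and welcome) tightening, since it yields the full operator identity directly.
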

\begin{proof}For any $X,Y\in \Gamma(D)$ and $\lambda$ in $\nu $, we have
\[
\nabla_X\phi Y + h(X,\phi Y) - \phi \nabla^*_XY -\phi h^*(X,Y) = g(Y,\xi)X - g(X,Y)\xi
\]
from equations (\ref{eq29}) and (\ref{eq211}).\\

By taking inner product with $\lambda$
\[
\bar{g}(A^*_{\lambda} \phi Y,X) = \bar{g}(- A_{\phi \lambda}Y,X)
\]
follows from (\ref{eq26}) and (\ref{eq213}).
\end{proof}
\begin{theorem}
A contact CR-submanifold $M$  of Sasakian statistical manifold $\bar{M}$ is called a contact CR-product if and only if
\begin{equation}\label{eq41a}
A_{\phi U}X = \eta(X)U
\end{equation}
for any $X$ in $D$ and $Z$ in $D^{\perp}$.
\begin{proof} For any $X$ in $D$ and $Z$ in $D^{\perp}$, (\ref{eq41a}) implies
\[
g(A_{\phi U}X,Z) = g(\eta(X)U,Z)
\]
From equation (\ref{eq213}), we have
\[
g(h^*(X,Z),\phi U) = g(\eta(X)\phi Z,\phi U)
\]
Therefore, Lemma (\ref{lemma43}) implies that the leaf $M^{\perp}$ of $D^{\perp}$ is totally geodesic in $M$
for any $Z, U \in \Gamma(D^{\perp})$ and $X$ in $D$. Also we have
\[
g(h^*(X,\phi Y),\phi Z) = g(\eta(X)Z,\phi Y) = 0
\]
for any $X, Y$ in $D$ and $ Z$ in $D^{\perp}$. So by Theorem (\ref{th35}) , the distribution $D$ is integrable.\\
Let $M^{\top}$ be the leaf of the distribution $D$, then we have
\begin{eqnarray}
g(\nabla^*_XY,Z) =g(\bar{\nabla}^*_XY,Z) &=& g(\phi \bar{\nabla}^*_XY, \phi Z )\nonumber \\
&=& g(\bar{\nabla}_X \phi Y,\phi Z)- g(X,Y)g(\xi,\phi Z) - g(Y, \xi)g(X,\phi Z) \nonumber
\end{eqnarray}
for any $X$, $Y$ in $D$ and $Z$ in $D^{\perp}$. This implies that 
\[
g(\nabla^*_XY,Z) = g(h(X,\phi Y), \phi Z) = g(A_{\phi Z }X,\phi Y) = 0
\]
This means,the leaf $M^{\top}$ is totally geodesic in distribution $D$. Thus the submanifold $M$ of Sasakian statistical manifold is a contact CR product.\\

Conversely, if the submanifold $M$ is contact CR-product, then from (\ref{eq41b}), we have $A_{\phi U}X - \eta(X)U \in  D$ for any $X$ in $D$ and $U$ in $D^{\perp}$. Now, it is sufficient to prove $A_{\phi U}X - \eta(X)U \in  D^{\perp}$ for any $X$ in $D$ and $U$ in $D^{\perp}$. 
Since the distribution $D$ is totally geodesic in $M$, we have 
\[
g(A_{\phi U}X - \eta(X)U, Y) = g(h^*(X,Y), \phi U)  = -g(\phi h^*(X,Y), U) 
\]
Since $M$ is a Sasakian statistical manifold, \[
g(A_{\phi U}X - \eta(X)U, Y) = -g(\phi \bar \nabla^*_XY, U) =- g(\nabla_X \phi U, Z) = 0
\]using Gauss formula.
So we get the desired result. 
\end{proof}
\end{theorem}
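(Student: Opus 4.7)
The plan is to establish the biconditional in two separate directions, relying on the shape-operator / second-fundamental-form correspondence $g(A_V X, Y) = g(h^*(X,Y), V)$ from (2.13), together with Lemma 4.3 (criterion for the leaf $M^{\perp}$ to be totally geodesic in $M$) and Theorem 3.5 (criterion for integrability of $D$).

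For the forward implication, I assume $A_{\phi U}X = \eta(X)U$ for all $X \in \Gamma(D)$ and $U \in \Gamma(D^{\perp})$, and I carry out three checks. First, pairing the identity with $Z \in \Gamma(D^{\perp})$ and invoking (2.13) rewrites the left side as $g(h^*(X,Z), \phi U)$; since $g(U,Z) = g(\phi U, \phi Z)$ (because $\eta(Z)=0$), the resulting relation is exactly the criterion of Lemma 4.3, so $M^{\perp}$ is totally geodesic in $M$. Second, pairing the identity with $\phi Y$ for $Y \in \Gamma(D)$ yields $g(h^*(X, \phi Y), \phi Z) = \eta(X)g(Z, \phi Y) = 0$, and the dual analogue of Theorem 3.5 (obtained by repeating its proof with $\bar{\nabla}^*$ in place of $\bar{\nabla}$) then gives integrability of $D$. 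Third, to show $M^{\top}$ is totally geodesic in $M$, I would evaluate $g(\nabla^*_X Y, Z)$ for $X, Y \in \Gamma(D)$, $Z \in \Gamma(D^{\perp})$ by rewriting it as $g(\phi \bar{\nabla}^*_X Y, \phi Z)$ (using $\eta(Z)=0$), then applying (2.9) to replace $\phi \bar{\nabla}^*_X Y$ by $\bar{\nabla}_X \phi Y$ plus correction terms that drop out against $\phi Z$, and finishing via the Weingarten formula to get $g(A_{\phi Z}X, \phi Y) = \eta(X) g(Z, \phi Y) = 0$.

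For the converse, assume $M$ is a contact $CR$-product. I decompose $A_{\phi U}X - \eta(X) U$ into its $D$ and $D^{\perp}$ components. Equation (4.1b), which is precisely the content of Lemma 4.3 under total geodesicity of $M^{\perp}$, shows that the $D^{\perp}$-component vanishes. For the $D$-component, I pair with $Y \in \Gamma(D)$ and use total geodesicity of the leaf $M^{\top}$, the Sasakian statistical identity (2.9), and the Gauss formula to reduce the expression to one of the form $g(\phi \bar{\nabla}^*_X Y, U)$ or $g(\nabla_X \phi U, Y)$, both of which vanish because $\phi U$ is normal and $D$ is preserved by $\nabla$ on the leaf.

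The principal difficulty is book-keeping between the dual connections: the hypothesis naturally pairs $A$ with $h^*$ via (2.13), but (2.9) mixes $\bar{\nabla}$ and $\bar{\nabla}^*$ asymmetrically, so one must carefully choose the primal or dual form of each Sasakian identity at each step. Recognizing the need for the $h^*$-version of Theorem 3.5, and keeping track of the correction terms $g(Y,\xi)X - g(Y,X)\xi$ in (2.9) while checking that they do not contribute (because $\phi Z$ is orthogonal to $\xi$ and to $TM$), is the most error-prone part of the argument.
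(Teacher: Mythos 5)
Your proposal follows the paper's proof essentially step for step: in the forward direction you use the pairing $g(A_VX,Y)=g(h^*(X,Y),V)$ from (\ref{eq213}) together with Lemma \ref{lemma43}, the integrability criterion of Theorem \ref{th35}, and the computation of $g(\nabla^*_XY,Z)$ via (\ref{eq29}) to obtain total geodesicity of both leaves, and in the converse you decompose $A_{\phi U}X-\eta(X)U$ into its $D$ and $D^{\perp}$ parts exactly as the paper does, killing the $D^{\perp}$ part with (\ref{eq41b}) and the $D$ part with (\ref{eq29}) and the Gauss formula. The only (cosmetic) difference is that you explicitly flag the need for the $h^*$-analogue of Theorem \ref{th35}, which the paper invokes tacitly.
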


\end{document}